\newtheorem{thm}{Theorem}[section]
\newtheorem{lem}[thm]{Lemma}
\newtheorem{cor}[thm]{Corollary}
\theoremstyle{definition}
\newtheorem{defn}[thm]{Definition}
\theoremstyle{remark}
\newtheorem{rem}[thm]{Remark}
\begin{document}

\title[analytic properties of   Ohno function]{Analytic properties of  Ohno function}

\author{Ken Kamano}
\address[Ken Kamano]{Department of Robotics, Osaka Institute of Technology, 1-45 Chaya-machi, Kita-ku, Osaka 530-8568, Japan}
\email{ken.kamano@oit.ac.jp}

\author{Tomokazu Onozuka}
\address[Tomokazu Onozuka]{Institute of Mathematics for Industry, Kyushu University 744, Motooka, Nishi-ku,
Fukuoka, 819-0395, Japan}
\email{t-onozuka@imi.kyushu-u.ac.jp}

\subjclass[2010]{Primary 11M32}
\keywords{Multiple zeta functions, Ohno's relation, Ohno function}

\begin{abstract}
Ohno's relation is a well-known relation on  the field of the multiple zeta values and  has an interpolation to complex function. 
In this paper, we call its complex function  Ohno function and study it.
 We consider the region of absolute convergence, give some new expressions, and  show new relations of the function. We also  give a direct proof of  the  interpolation of Ohno's relation.
\end{abstract}

\maketitle

\section{Introduction}
 For positive integers $k_1,\ldots,k_r$ with $k_r \ge 2$, the multiple zeta values (MZVs) are defined by
\begin{align*}
 \zeta(k_1,\ldots, k_r)
 :=\sum_{1\le n_1<\cdots <n_r} \frac {1}{n_1^{k_1}\cdots n_r^{k_r}}.
\end{align*}
We say that an index $(k_1,\ldots,k_r)\in\mathbb{Z}_{\ge1}^r$ is admissible if $k_r\ge2$.
For an index $\boldsymbol{k}=(k_1,\ldots,k_r)$, 
$\text{wt}(\boldsymbol{k}):=k_1+\cdots +k_r$ and $\text{dep}(\boldsymbol{k}):= r$ are called 
weight and depth of $\boldsymbol{k}$, respectively.
It is known that MZVs satisfy many algebraic relations over $\mathbb{Q}$. 
Ohno's relation is a well-known relation among MZVs.
\begin{defn}
For an admissible index 
 \[
  \boldsymbol{k}:=(\underbrace{1,\ldots,1}_{a_1-1},b_1+1,\dots,\underbrace{1,\ldots,1}_{a_d-1},b_d+1) \quad (a_i, b_i\ge1),
 \]
we define the dual index of $\boldsymbol{k}$ by 
 \[
  \boldsymbol{k}^\dagger :=(\underbrace{1,\ldots,1}_{b_d-1},a_d+1,\dots,\underbrace{1,\ldots,1}_{b_1-1},a_1+1).
 \]
\end{defn}
\begin{thm}[Ohno's relation; Ohno \cite{Oho99}] \label{ohno}
 For an admissible index $(k_1,\ldots,k_r)$ and $m\in\mathbb{Z}_{\ge 0}$, we have
 \begin{align*}
  \sum_{\substack{ e_1+\cdots+e_r=m \\ e_i\ge0\,(1\le i\le r) }}
  \zeta (k_1+e_1,\ldots,k_r+e_r) 
  =\sum_{\substack{ e'_1+\cdots+e'_{r'}=m \\ e'_i\ge0\,(1\le i\le r') }}
  \zeta (k'_1+e'_1,\ldots,k'_{r'}+e'_{r'}), 
 \end{align*}
 where the index $(k'_1,\ldots,k'_{r'})$ is the dual index of $(k_1,\ldots,k_r)$.
\end{thm}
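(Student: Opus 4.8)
The plan is to prove, in one stroke, a generating-function identity that encodes all of Ohno's relations simultaneously, and then to recover the stated identity for each fixed $m$ by comparing coefficients. Introduce a small complex (equivalently, formal) parameter $u$ and set
\[
 \Phi(\boldsymbol k;u):=\sum_{m\ge 0}u^m\sum_{\substack{e_1+\cdots+e_r=m\\ e_i\ge0}}\zeta(k_1+e_1,\ldots,k_r+e_r).
\]
Both sides of the asserted equality are exactly $\Phi(\boldsymbol k;u)$ and $\Phi(\boldsymbol k^\dagger;u)$, so it suffices to prove $\Phi(\boldsymbol k;u)=\Phi(\boldsymbol k^\dagger;u)$ as power series in $u$. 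The engine is the iterated-integral representation of MZVs: writing $w=\wt(\boldsymbol k)$ and encoding $\boldsymbol k$ by the word in the alphabet $\{x,y\}$ with $x=dt/t$ and $y=dt/(1-t)$ read off from the block pattern $x^{k_1-1}y\,x^{k_2-1}y\cdots x^{k_r-1}y$, one has an integral of $\prod_j\omega_{\epsilon_j}(t_j)$ over the simplex $0<t_1<\cdots<t_w<1$, and the case $m=0$ (duality) already follows from the involution $t\mapsto 1-t$.

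Next I would convert $\Phi(\boldsymbol k;u)$ into a single \emph{deformed} iterated integral. Adding $e_i$ to $k_i$ amounts to inserting $e_i$ extra copies of $x=dt/t$ into the $i$-th block, and summing such a run against $u^{e_i}$ collapses by the elementary identity $\int_{0<t_1<\cdots<t_a<s}\prod_j dt_j/t_j=(\log s)^a/a!$, so that $\sum_{a\ge0}u^a(\log s)^a/a!=s^{u}$. Threading this through every block replaces the plain forms $\omega_{\epsilon_j}$ by $u$-deformed forms carrying factors of the shape $(s_i/s_{i-1})^{u}$ at the block boundaries (the $s_i$ being the variables that support the $y$'s, together with the endpoint $1$). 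I would verify this first in the rank-one case, where it reduces to the concrete check
\[
 \sum_{m\ge0}u^m\zeta(k+m)=\int_{0<t_1<\cdots<t_k<1}\frac{dt_1}{1-t_1}\,\frac{dt_2}{t_2}\cdots\frac{dt_{k-1}}{t_{k-1}}\,\frac{t_k^{-u}\,dt_k}{t_k},
\]
whose right-hand side equals $\sum_{n\ge1}n^{-(k-1)}(n-u)^{-1}$ term by term, and then assemble the general statement block by block, obtaining an identity $\Phi(\boldsymbol k;u)=\int_{0<t_1<\cdots<t_w<1}\prod_{j}\omega^{(u)}_{\epsilon_j}(t_j)$ valid for $|u|$ small, in which the integrand depends on $\boldsymbol k$ only through its word.

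Finally, I would apply the involution $t_j\mapsto 1-t_{w+1-j}$ on the simplex. On the undeformed integrand this is precisely the change of variables giving the $m=0$ duality: it reverses the word and exchanges $x\leftrightarrow y$, carrying the word of $\boldsymbol k$ to that of $\boldsymbol k^\dagger$. The crucial point is that the $u$-deformation has been arranged symmetrically in the two endpoints of each block, so that the factors $(s_i/s_{i-1})^u$ (which become powers of the $t$'s near $1$, i.e.\ of $1-t$ after the substitution) are carried into exactly the deformation attached to the word of $\boldsymbol k^\dagger$. Hence the deformed integral for $\boldsymbol k$ equals that for $\boldsymbol k^\dagger$, giving $\Phi(\boldsymbol k;u)=\Phi(\boldsymbol k^\dagger;u)$, and comparing coefficients of $u^m$ proves the theorem.

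The hard part will be the bookkeeping in the middle step: pinning down the deformed differentials $\omega^{(u)}_{\epsilon_j}$ precisely enough that (a) the geometric summation is legitimate and interchanges with integration for small $u$, and (b) the resulting integrand is genuinely invariant under $t\mapsto 1-t$ after relabelling, so that the duality change of variables applies verbatim. One must check that the $t^{\pm u}$ and $(1-t)^{\pm u}$ factors attach to the correct variables and introduce no spurious boundary divergence; establishing this symmetry is really the heart of the argument. An alternative, more combinatorial route that sidesteps the analytic subtleties is the connected-sum method: one introduces a connector $m_r!\,n_s!/(m_r+n_s)!$ suitably weighted by $u$ and proves a transport relation moving indices from $\boldsymbol k$ to $\boldsymbol k^\dagger$, at the cost of having to establish that transport identity.
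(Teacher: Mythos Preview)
Your overall strategy—encode the Ohno sums in a one-parameter family, express that family as a deformed iterated integral, and then use a change of variables that realizes duality—is exactly the shape of the argument that works, and it is the shape of the argument this paper gives (in Section~4, via Theorem~\ref{thm:integral_exp}). But the specific change of variables you propose, namely the plain duality involution $t_j\mapsto 1-t_{w+1-j}$ on the weight-$w$ simplex, does \emph{not} carry the deformed integrand for $\boldsymbol{k}$ to the deformed integrand for $\boldsymbol{k}^{\dagger}$, and this is a genuine gap rather than mere bookkeeping.

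The reason is structural. Your $u$-deformation records ``insert extra copies of $x=dt/t$ into each $x$-block''. The involution $t\mapsto 1-t$ swaps $x\leftrightarrow y$ and reverses the word; hence it sends ``extra $x$'s inserted into the $x$-blocks of $\boldsymbol{k}$'' to ``extra $y$'s inserted into the $y$-blocks of $\boldsymbol{k}^{\dagger}$''. What you \emph{need} on the other side is ``extra $x$'s inserted into the $x$-blocks of $\boldsymbol{k}^{\dagger}$''. These two operations are different (already their counts differ: the number of $x$-blocks equals the depth, which changes under duality), so no arrangement of the factors $(s_i/s_{i-1})^{u}$ can make the integrand invariant under the naive involution. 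Concretely, for $\boldsymbol{k}=(3)$ your deformed integral carries a single factor $t_3^{-u}$; under $t\mapsto 1-t$ this becomes a single factor $(1-s_1)^{-u}$, whereas the deformed integral for $\boldsymbol{k}^{\dagger}=(1,2)$ must carry \emph{two} deformation factors (depth $2$). The asserted symmetry simply fails.

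What the paper does instead is to first collapse the weight-$w$ simplex to a $2d$-dimensional one (Theorem~\ref{thm:integral_exp}), in which all the ``extra $x$'' information is packaged into the single factor $\bigl(\log\frac{t_2\cdots t_{2d}}{t_1\cdots t_{2d-1}}\bigr)^{s}$, and then apply \emph{Ulanskii's} change of variables (Section~4). This is not $t\mapsto 1-t$ but a rational substitution designed precisely so that the product $\prod_{\ell}t_{2\ell}/t_{2\ell-1}$ is preserved while the $(a_i,b_i)$-logarithms are swapped in the required way. That invariance of the product is the crux that your plain involution lacks. If you want to salvage your outline, replace the final step by Ulanskii's substitution on the reduced $2d$-simplex; alternatively, as you note, the connected-sum method of Seki--Yamamoto gives a different, self-contained route.
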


For an admissible index $\boldsymbol{k}=(k_1,\ldots,k_r)$ and $s\in\mathbb{C}$ with $\Re(s)>-1$, Hirose-Murahara-Onozuka \cite{HMO20} defined the Ohno function $I_{\boldsymbol{k}}(s)$ by
\begin{align}\label{ohnofunc}
 I_{\boldsymbol{k}}(s):=\sum_{i=1}^{r}\sum_{0<n_1<\cdots<n_r} \frac{1}{n_1^{k_1}\cdots n_r^{k_r}}
  \cdot \frac{1}{n_i^{s}} \prod_{j\ne i} \frac{n_j}{ n_j-n_i }.
\end{align}
This is a sum of special cases of $\zeta_{\mathfrak{sl}(r+1)}(\boldsymbol{s})$ which is called the Witten multiple zeta function associated with $\mathfrak{sl}(r+1)$. 
The Witten multiple zeta  function was first introduced in Matsumoto-Tsumura \cite{MaTs06}, which is also called the zeta function associated with the root system of type $A_r$ (for more details, see Komori-Matsumoto-Tsumura \cite{KMT10}), and continued meromorphically to the whole complex space $\mathbb{C}^{r(r+1)/2}$. 
Hence $I_{\boldsymbol{k}}(s)$ can be continued meromorphically to $\mathbb{C}$.

When $s=m\in\mathbb{Z}_{\ge 0}$, the Ohno function is the Ohno sum, that is, 
\begin{align*}
I_{\boldsymbol{k}}(m)=\sum_{\substack{ e_1+\cdots+e_r=m \\ e_i\ge0\,(1\le i\le r) }}\zeta (k_1+e_1,\ldots,k_r+e_r),
\end{align*}
and by Theorem \ref{ohno}, we have
 \begin{align*}
  I_{\boldsymbol{k}}(m)=I_{\boldsymbol{k}^\dagger}(m).
 \end{align*}
In \cite{HMO20},  Hirose-Murahara-Onozuka gave an interpolation of Ohno's relation to complex function.
\begin{thm}[an interpolation of Ohno's relation] \label{interpolation}
 For an admissible index $\boldsymbol{k}$ and $s\in\mathbb{C}$, we have
 \begin{align*}
  I_{\boldsymbol{k}}(s)=I_{\boldsymbol{k}^\dagger}(s).
 \end{align*}
\end{thm}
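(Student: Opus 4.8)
The plan is to exploit the fact that the desired identity is already known at every non-negative integer. Indeed, by the interpretation $I_{\boldsymbol{k}}(m)=\sum_{e_1+\cdots+e_r=m}\zeta(k_1+e_1,\ldots,k_r+e_r)$ together with Ohno's relation (Theorem \ref{ohno}) and the fact that $\boldsymbol{k}\mapsto\boldsymbol{k}^\dagger$ is an admissibility-preserving involution, one has $I_{\boldsymbol{k}}(m)=I_{\boldsymbol{k}^\dagger}(m)$ for all $m\in\mathbb{Z}_{\ge0}$. Set $F(s):=I_{\boldsymbol{k}}(s)-I_{\boldsymbol{k}^\dagger}(s)$. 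Both summands are holomorphic on their common half-plane of absolute convergence, which (by the analysis of the region of convergence carried out elsewhere in the paper) contains $\{\,s:\Re(s)\ge0\,\}$; there $F$ is holomorphic and vanishes at $0,1,2,\ldots$. The idea is to conclude $F\equiv0$ by a Carlson-type uniqueness theorem, and then to propagate the identity to all of $\mathbb{C}$ by analytic continuation, using that both functions are meromorphic on $\mathbb{C}$ via the Witten (type $A_r$) zeta function.

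First I would put the defining series into the shape of a Dirichlet series in the variable $s$. For each fixed $i$ the only dependence on $s$ is through $n_i^{-s}$, so grouping terms according to the value $n=n_i$ gives $I_{\boldsymbol{k}}(s)=\sum_{i=1}^{r}\sum_{n\ge1}c^{(i)}_n\,n^{-s}$ with coefficients $c^{(i)}_n$ independent of $s$. In the region of absolute convergence this rearrangement is legitimate and yields $|I_{\boldsymbol{k}}(s)|\le\sum_{i,n}|c^{(i)}_n|\,n^{-\Re(s)}$, which is finite and independent of $\Im(s)$ once $\Re(s)$ is fixed inside that region. Hence on any half-plane $\Re(s)\ge\sigma$ contained in the region of convergence, $I_{\boldsymbol{k}}$, $I_{\boldsymbol{k}^\dagger}$, and therefore $F$, are bounded; in particular $F$ is of exponential type $0$.

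Next I would invoke Carlson's theorem: a function holomorphic on $\Re(s)\ge0$ that is of exponential type, satisfies $\limsup_{y\to\pm\infty}|y|^{-1}\log|F(iy)|<\pi$, and vanishes at every $m\in\mathbb{Z}_{\ge0}$ must be identically zero. Our $F$ is bounded on $\Re(s)\ge0$, so both growth hypotheses hold with room to spare, and the zeros at the non-negative integers are supplied by Ohno's relation. This gives $F\equiv0$ on $\Re(s)\ge0$, hence on a nonempty open set; since $I_{\boldsymbol{k}}$ and $I_{\boldsymbol{k}^\dagger}$ are meromorphic on $\mathbb{C}$, the identity theorem upgrades this to $I_{\boldsymbol{k}}(s)=I_{\boldsymbol{k}^\dagger}(s)$ for all $s\in\mathbb{C}$.

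The step I expect to be the real obstacle is the verification of the growth hypotheses of Carlson's theorem, i.e.\ controlling $F$ in the $\Im(s)$-direction; everything hinges on having genuine absolute convergence on a full right half-plane so that the Dirichlet-series bound applies uniformly in $\Im(s)$. This is precisely why the determination of the region of absolute convergence is needed as a prerequisite. An alternative \emph{direct} route would be to produce an integral representation of $I_{\boldsymbol{k}}(s)$ that is manifestly invariant under $\boldsymbol{k}\mapsto\boldsymbol{k}^\dagger$, mirroring the iterated-integral proof of the duality underlying Ohno's relation; but the Carlson argument is shorter and avoids constructing the symmetric representation by hand.
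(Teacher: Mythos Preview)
Your Carlson-theorem argument is correct: the absolute-convergence region established in Theorem~\ref{main1} indeed contains $\{\Re(s)>-1\}$ for every admissible $\boldsymbol{k}$, and the crude bound
\[
  |I_{\boldsymbol{k},i}(s)|\le
  \sum_{0<n_1<\cdots<n_r}\frac{1}{n_1^{k_1}\cdots n_r^{k_r}}
  \cdot\frac{1}{n_i^{\Re(s)}}\prod_{j\ne i}\frac{n_j}{|n_j-n_i|}
\]
is finite and nonincreasing in $\Re(s)$, so $F=I_{\boldsymbol{k}}-I_{\boldsymbol{k}^\dagger}$ is bounded on $\Re(s)\ge0$ and Carlson's hypotheses are satisfied with room to spare.

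However, your route is \emph{not} the one the paper takes in Section~4. Your argument is essentially the original proof from \cite{HMO20}: it presupposes Ohno's relation (Theorem~\ref{ohno}) at integer points and then propagates it by a uniqueness principle. The paper instead gives a \emph{direct} proof that does not assume Ohno's relation at all: it feeds the integral expression of Theorem~\ref{thm:integral_exp} through Ulanskii's change of variables
\[
  \frac{1-t_{2\ell-1}}{1-t_{2\ell}}=\frac{u_{2(d-\ell+1)+1}}{u_{2(d-\ell+1)}},\qquad
  \frac{t_{2\ell}}{t_{2\ell+1}}=\frac{1-u_{2(d-\ell+1)}}{1-u_{2(d-\ell+1)-1}},
\]
which sends the integrand for $\boldsymbol{k}$ to that for $\boldsymbol{k}^\dagger$ while preserving the domain, the measure $\prod\frac{dt}{(1-t_{2\ell-1})t_{2\ell}}$, and the factor $\bigl(\log\prod t_{2\ell}/t_{2\ell-1}\bigr)^{s}$. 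Amusingly, this is exactly the ``alternative direct route'' you sketch in your last paragraph. What the paper's approach buys is logical independence from Theorem~\ref{ohno} (so Ohno's relation becomes a corollary by specialising $s=m$), at the cost of first proving the $2d$-fold integral representation; what your approach buys is brevity, at the cost of taking Ohno's relation as input.
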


In this paper, we study the Ohno function $I_{\boldsymbol{k}}(s)$. In  Section 2, we give a precise region of absolute convergence of the series \eqref{ohnofunc}.
\begin{thm}\label{main1}
The series \eqref{ohnofunc} converges absolutely  only for
\begin{align*}
\max_{1\leq j\leq r}\{r-2j+2-(k_j+\cdots+k_r)\}<\Re(s).
\end{align*}
\end{thm}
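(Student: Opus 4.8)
Write $\sigma=\Re(s)$ and abbreviate the stated bound as $B:=\max_{1\le j\le r}\{r-2j+2-(k_j+\cdots+k_r)\}$. Since \eqref{ohnofunc} is a finite sum over $i\in\{1,\dots,r\}$, it converges absolutely if and only if each inner series
\[
\widetilde S_i:=\sum_{0<n_1<\cdots<n_r}\frac{1}{n_i^{\sigma}}\prod_{l=1}^r\frac{1}{n_l^{k_l}}\prod_{j\ne i}\frac{n_j}{|n_j-n_i|}
\]
converges. The plan is therefore to show that $\widetilde S_i<\infty$ for every $i$ as soon as $\sigma>B$, and that $\widetilde S_r=\infty$ once $\sigma\le B$. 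The essential difficulty is the product $\prod_{j\ne i}n_j/|n_j-n_i|$, whose factors become large when some $n_j$ lies close to $n_i$; the whole analysis amounts to weighing this near-diagonal growth against the decay supplied by $n_i^{-\sigma}\prod_l n_l^{-k_l}$.

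For the necessity of $\sigma>B$ I would exhibit divergence along chosen directions. Fix $j$ and take $i=r$. Restrict the summation to indices with $n_1,\dots,n_{j-1}$ held at fixed bounded values and with $n_j<\cdots<n_r$ ranging over a ``spread out'' family at scale $N$ (say $n_j\in[N,2N)$ and each consecutive gap in $[N,2N)$), so that $n_j,\dots,n_r$ and all their pairwise differences are $\asymp N$. On this family $n_r^{-\sigma}\prod_l n_l^{-k_l}\asymp N^{-\sigma-(k_j+\cdots+k_r)}$ and $\prod_{l<r}n_l/(n_r-n_l)\asymp N^{-(j-1)}$, the $j-1$ small coordinates each contributing a factor $\asymp N^{-1}$ and the large ones a factor $\asymp1$, while the family contains $\asymp N^{r-j+1}$ points. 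Hence the contribution from scale $N$ is $\asymp N^{\,r-2j+2-(k_j+\cdots+k_r)-\sigma}$, and summing over dyadic $N$ diverges precisely when $\sigma\le r-2j+2-(k_j+\cdots+k_r)$. Letting $j$ run shows $\widetilde S_r=\infty$ whenever $\sigma\le B$.

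For sufficiency I would assume $\sigma>B$ and bound each $\widetilde S_i$ by a dyadic decomposition of the simplex, splitting the sum according to the block $[2^{a_l},2^{a_l+1})$ containing each $n_l$. On a fixed block pattern a factor $1/|n_j-n_i|$ behaves like $1/\max(n_i,n_j)$ when $n_j$ and $n_i$ sit at different scales, while the coordinates sharing the scale of $n_i$ contribute, after summation over that scale, only a harmless power of $\log$. Carrying out the one-dimensional sums scale by scale reduces the convergence of each block pattern to a finite list of linear inequalities in $\sigma$. The key bookkeeping point is that a dyadic-scale sum $\sum_{M\le N}M^{P}$ is $\asymp\max(1,N^{P})$ up to logs, \emph{not} $\asymp N^{P+1}$; with this in hand, the block pattern in which $n_i$ sits at the top scale together with the threshold $j\le i$ produces exactly the condition $\sigma>r-2j+2-(k_j+\cdots+k_r)$, all such being implied by $\sigma>B$, while every block lying strictly above $n_i$ produces only a $\sigma$-free condition $\sum_{l\ge b}(k_l-1)>0$, which holds because $k_r\ge2$ by admissibility. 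Summing the resulting geometric series over all patterns then gives $\widetilde S_i<\infty$.

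I expect the sufficiency step to be the main obstacle: one must organize the scale bookkeeping so that every block pattern, including the genuinely multi-scale ones, is controlled, and must check that the logarithmic factors from same-scale near-diagonal summation never upgrade a borderline power into divergence. It is precisely the distinction between dyadic-scale sums and full sums noted above that prevents the multi-scale patterns from generating spurious, stronger conditions. A cleaner alternative worth pursuing is to read the region off from the known domain of absolute convergence of the type-$A_r$ zeta function, of which \eqref{ohnofunc} is a finite combination, but extracting the sharp bound $B$ from that route still requires the same power-counting on the faces of the defining cone.
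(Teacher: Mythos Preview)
Your proposal is correct in outline but takes a genuinely different route from the paper. The paper follows precisely the ``cleaner alternative'' you mention in your last paragraph: it invokes the Zhao--Zhou criterion \cite[Proposition~2.1]{ZhZh11} for absolute convergence of root-system zeta functions of type $A_r$, specializes the exponents $\sigma_{\boldsymbol{i}}$ so that the general series becomes $(-1)^{i-1}I_{\boldsymbol{k},i}(s)$, and then checks which of the finitely many Zhao--Zhou inequalities are non-redundant. The upshot of that check is that only the ``tail'' subsets $\boldsymbol{i}=(j,j+1,\ldots,r)$ matter, yielding $k_j+\cdots+k_r+\sigma>r-2j+2$ for $j\le i$ and the $\sigma$-free conditions $\sum_{l\ge j}(k_l-1)>0$ for $j>i$, the latter being automatic from admissibility---exactly the dichotomy you identified.

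Your direct dyadic approach is more elementary and more illuminating. The necessity paragraph is essentially complete and arguably clearer than the paper's: the spread-out family at scale $N$ makes visible which face of the convergence cone each inequality comes from, something the Zhao--Zhou black box hides. The sufficiency sketch correctly identifies both the $\sigma$-dependent and $\sigma$-free conditions and the role of admissibility, and your remark that same-scale near-diagonal sums contribute only logarithms is the right key observation. What remains is exactly what you flag as the main obstacle: organizing \emph{all} block patterns (not just the single-top-scale one) and verifying none of them imposes a strictly stronger constraint. This is doable, but it amounts to reproving the relevant special case of Zhao--Zhou from scratch. The trade-off is thus self-containedness and geometric intuition on your side versus a short, clean finite check against an existing theorem on the paper's side.
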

In Section 3, we give the following integral expression of the Ohno function;
\begin{thm}\label{thm:integral_exp}
For an admissible index 
 \[
  \boldsymbol{k}:=(\underbrace{1,\ldots,1}_{a_1-1},b_1+1,\dots,\underbrace{1,\ldots,1}_{a_d-1},b_d+1) \quad (a_i, b_i\ge1)
 \]
  and $s\in\mathbb{C}$ with $\Re(s)>-1$,  we have
\begin{align*}
I_{\boldsymbol{k}}(s) &=\dfrac{1}{(a_1-1)!(b_1-1)!  \cdots (a_d-1)! (b_d-1)! \Gamma (s+1) }\\
    &\times \int_{0<t_1<\cdots < t_{2d}<1}  \dfrac{dt_1 \cdots dt_{2d}}{(1-t_1)t_2  \cdots (1-t_{2d-1})t_{2d} }\\ 
	&\times \left(  \log  \dfrac{1-t_1}{1-t_2}   \right)^{a_1-1}
	\left(  \log \dfrac{t_3}{t_2}   \right)^{b_1-1}
	\cdots 
	\left( \log  \dfrac{1-t_{2d-1}}{1-t_{2d}}   \right)^{a_{d}-1}
	\left( \log  \dfrac{1}{t_{2d}}   \right)^{b_{d}-1} 
	\left( \log  \dfrac{t_2 \cdots t_{2d} }{t_1 \cdots t_{2d-1}}  \right)^{s}.
\end{align*}
\end{thm}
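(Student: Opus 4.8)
The plan is to pass through the iterated-integral representation of multiple zeta values and to realise the stated formula as the ``collapsed'' form of that representation, with the factor $\frac{1}{\Gamma(s+1)}\left(\log\frac{t_2\cdots t_{2d}}{t_1\cdots t_{2d-1}}\right)^{s}$ encoding the Ohno summation. Writing $y=\frac{dt}{1-t}$ and $x=\frac{dt}{t}$, the admissible index $\boldsymbol{k}$ corresponds to the word $y^{a_1}x^{b_1}\cdots y^{a_d}x^{b_d}$, and $\zeta(\boldsymbol{k})$ is the iterated integral of this word over $0<t_1<\cdots<t_{\wt(\boldsymbol{k})}<1$. First I would record the elementary run-collapsing identity $\frac{1}{c!}\left(\log\frac{1-\alpha}{1-\beta}\right)^{c}=\int_{\alpha<u_1<\cdots<u_c<\beta}\prod_i\frac{du_i}{1-u_i}$ and its counterpart $\frac{1}{c!}\left(\log\frac{\beta}{\alpha}\right)^{c}=\int_{\alpha<v_1<\cdots<v_c<\beta}\prod_i\frac{dv_i}{v_i}$ for $x$-forms; applying these to each of the $2d$ displayed logarithmic factors (with the normalising factorials) reconstitutes the full $\wt(\boldsymbol{k})$-dimensional iterated integral out of the $2d$-dimensional one, so that at $s=0$ the asserted identity reduces exactly to $I_{\boldsymbol{k}}(0)=\zeta(\boldsymbol{k})$.

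Next I would treat $s=m\in\mathbb{Z}_{\ge0}$. Expanding $\frac{1}{m!}\left(\log R\right)^{m}$ with $R=\prod_{i}\frac{t_{2i}}{t_{2i-1}}$ by the multinomial theorem distributes $f_i$ inserted $x$-forms into each interval $(t_{2i-1},t_{2i})$ with $\sum_i f_i=m$. The key step is the shuffle product formula for iterated integrals: over $(t_{2i-1},t_{2i})$ the product of the $y$-collapse factor (an ordered integral of $a_i-1$ copies of $y$) with the inserted $\frac{1}{f_i!}\left(\log\frac{t_{2i}}{t_{2i-1}}\right)^{f_i}$ (an ordered integral of $f_i$ copies of $x$) equals the sum over all interleavings of these $a_i-1$ $y$'s and $f_i$ $x$'s. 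Each interleaving distributes the $x$'s into the $a_i$ slots cut out by the $y$'s, and reading off the resulting word shows that block $i$ contributes precisely the indices $(1+g_1,\ldots,1+g_{a_i-1},b_i+1+g_{a_i})$ summed over $g_1+\cdots+g_{a_i}=f_i$. Summing over all blocks with $\sum_i f_i=m$ reproduces $\sum_{|\boldsymbol{e}|=m}\zeta(\boldsymbol{k}+\boldsymbol{e})=I_{\boldsymbol{k}}(m)$, establishing the formula at every nonnegative integer.

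Finally I would upgrade from integers to all $s$ with $\Re(s)>-1$ by analytic continuation. The left-hand side is holomorphic there since its series converges by Theorem \ref{main1} (for admissible $\boldsymbol{k}$ the threshold in that theorem is $\le -1$), and the right-hand side is holomorphic because $(\log R)^s=e^{s\log\log R}$ is entire in $s$ pointwise and the integral converges locally uniformly for $\Re(s)>-1$; since $\log R>0$ on the open simplex one has $|(\log R)^s|=(\log R)^{\Re(s)}$, so both sides are bounded on vertical lines and, with the $\Gamma(s+1)^{-1}$ damping, of at most exponential type on right half-planes. As they agree on $\mathbb{Z}_{\ge0}$, a Carlson-type uniqueness theorem forces equality throughout $\Re(s)>-1$. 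Alternatively one can expand the reconstituted integral directly into a series: substituting $t_{2i-1}=t_{2i}\xi_i$ isolates $\log R=-\log\prod_i\xi_i$, and the beta-type evaluation $\int_0^1 \xi^{n-1}\left(\log\frac{1}{\xi}\right)^s d\xi=\Gamma(s+1)/n^{s+1}$ produces the $s$-dependent powers, matching the Witten-type defining series of $I_{\boldsymbol{k}}(s)$ term by term.

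The main obstacle is the passage to non-integer $s$. For integer $s$ the whole content is the combinatorial bookkeeping of the shuffle, together with the observation that placing every insertion of block $i$ inside the single interval $(t_{2i-1},t_{2i})$ — rather than at the true Ohno position at the end of each entry — yields the same iterated integral because the relevant $x$-forms remain contiguous with the block's $x$-run; this is routine once set up. The genuinely delicate points are (i) the absolute convergence and holomorphy of the integral up to $\Re(s)=-1$, which requires controlling the integrand near the faces of the simplex where the measure is singular and $\log R$ degenerates, and (ii) the growth estimates in $s$ needed to invoke the uniqueness theorem (or, in the direct route, the careful resummation of the multidimensional series and the handling of the power of $\log R$ when $d>1$, where it is a power of a genuine sum of logarithms).
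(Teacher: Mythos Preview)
Your route is genuinely different from the paper's, and the step you yourself flag as delicate is exactly what the paper's method avoids.

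The paper never passes through integer $s$. Its key lemma is the identity, valid for distinct $c_i>0$ and all $\Re(s)>-1$,
\[
\sum_{i=1}^r \frac{1}{c_i^{s+1}}\prod_{j\ne i}\frac{1}{c_j-c_i}
=\frac{1}{\Gamma(s+1)}\int_{(0,\infty)^r} e^{-c_1x_1-\cdots-c_rx_r}\,(x_1+\cdots+x_r)^{s}\,dx_1\cdots dx_r,
\]
proved by a short induction on $r$ (the base case is the gamma integral; the inductive step integrates out one variable after the triangular change $y_j=x_j+\cdots+x_r$). Taking $c_j=n_j$ and combining with $n^{-b}=\frac{1}{(b-1)!}\int_0^\infty e^{-ny}y^{b-1}\,dy$ for the exponents $b_1,\ldots,b_d$ turns $I_{\boldsymbol{k}}(s)$ into an $(r+d)$-fold integral over $(0,\infty)^{r+d}$; summing the geometric series in the $m_j=n_j-n_{j-1}$ and applying an exponential substitution $t=e^{-(\text{linear form})}$ maps this to the stated simplex integral, after which the $a_i-1$ ``$y$'' variables in each block collapse exactly as in your first paragraph. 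Because the lemma is already valid for complex $s$, no uniqueness theorem is required.

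Your integer-$s$ argument via the shuffle product is correct and gives a pleasant combinatorial interpretation of the formula. But both of your proposed extensions to general $s$ are incomplete as written: for Carlson you must bound $I_{\boldsymbol{k}}(s)$ and the integral uniformly on the whole half-plane $\Re(s)\ge0$ with type $<\pi$ on vertical lines, and you have not supplied these estimates; for the direct series route with $d>1$, the factor $\bigl(\sum_i\log\frac{1}{\xi_i}\bigr)^{s}$ does not factor, and unwinding its integral back to the Witten-type series defining $I_{\boldsymbol{k}}(s)$ is essentially the content of the paper's lemma above. So what you identified as the main obstacle is real, and the paper's contribution is precisely a clean way around it.
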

In Section 4, we give a new proof of Theorem \ref{interpolation}. In the original proof, we assume Ohno's relation, but our new proof  gives Theorem \ref{interpolation} directly. Our method is based on Theorem \ref{thm:integral_exp} and the proof given by Ulanskii \cite{U}. 
In Section 5, we consider an interpolation of $T$-interpolated sum formula.
Finally, in Section 6, we show another expression of the Ohno function. 

\begin{thm}\label{main6}
For an admissible index  $\boldsymbol{k}=(k_1,\ldots,k_r)$ and $s\in\mathbb{C}$ with $\max_{1\leq j\leq r}\{r-2j+2-(k_j+\cdots+k_r)\}<\Re(s)<0$,  we have
\begin{align*}
I_{\boldsymbol{k}}(s)=&-\frac{\sin(\pi s)}{\pi}\sum_{0<n_1<\cdots<n_r}\frac{1}{n_1^{k_1-1}\cdots n_r^{k_r-1}}\int_0^\infty\frac{w^{-s-1}}{(w+n_1)\cdots(w+n_r)}dw.
\end{align*}
\end{thm}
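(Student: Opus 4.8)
The plan is to reduce the claimed identity to a purely \emph{local} statement about a single tuple $0<n_1<\cdots<n_r$, and then to recognize that local statement as a consequence of the classical Beta integral together with the reflection formula. Throughout I work with $s$ in the strip where the outer series converges absolutely (Theorem \ref{main1}) and where, in addition, the integral on the right converges for each fixed tuple; the latter requires $-r<\Re(s)<0$, and on the common domain every manipulation below is legitimate.

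First I would rewrite $I_{\boldsymbol{k}}(s)$ by collecting, for each fixed tuple $0<n_1<\cdots<n_r$, the contributions of all $i$. Starting from \eqref{ohnofunc} and using $\prod_{j\ne i}\frac{n_j}{n_j-n_i}=\left(\prod_{j\ne i}n_j\right)\prod_{j\ne i}\frac{1}{n_j-n_i}$, a short exponent count shows that the $i$-th summand equals $n_i^{-k_i-s}\prod_{j\ne i}\frac{n_j^{1-k_j}}{n_j-n_i}$; factoring out $\prod_j n_j^{1-k_j}=1/(n_1^{k_1-1}\cdots n_r^{k_r-1})$ turns this into $\big(\prod_j n_j^{1-k_j}\big)\,n_i^{-s-1}/\prod_{j\ne i}(n_j-n_i)$. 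Since the series converges absolutely, the finite sum over $i$ may be moved inside the sum over the tuple, giving
\[
I_{\boldsymbol{k}}(s)=\sum_{0<n_1<\cdots<n_r}\frac{1}{n_1^{k_1-1}\cdots n_r^{k_r-1}}\sum_{i=1}^{r}\frac{n_i^{-s-1}}{\prod_{j\ne i}(n_j-n_i)}.
\]
Comparing this with the target shows that the whole theorem is equivalent to the tuple-wise identity
\[
\sum_{i=1}^{r}\frac{n_i^{-s-1}}{\prod_{j\ne i}(n_j-n_i)}=-\frac{\sin(\pi s)}{\pi}\int_0^\infty\frac{w^{-s-1}}{(w+n_1)\cdots(w+n_r)}\,dw .
\]

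Next I would prove this local identity. The substitution $w=nt$ together with the Beta integral gives, for $-1<\Re(s)<0$,
\[
\int_0^\infty\frac{w^{-s-1}}{w+n}\,dw=n^{-s-1}\int_0^\infty\frac{t^{-s-1}}{1+t}\,dt=n^{-s-1}\,\Gamma(-s)\Gamma(1+s)=-\frac{\pi}{\sin(\pi s)}\,n^{-s-1},
\]
the last step being the reflection formula. Applying the partial fraction decomposition $1/\prod_j(w+n_j)=\sum_i\big(\prod_{j\ne i}(n_j-n_i)\big)^{-1}(w+n_i)^{-1}$ and integrating term by term then yields the local identity for $-1<\Re(s)<0$. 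Both sides are holomorphic in $s$ on the full strip $-r<\Re(s)<0$ — the left-hand side as a finite combination of the entire functions $n_i^{-s-1}$, the right-hand side because the product integral converges there — so by the identity theorem the local identity persists on all of $-r<\Re(s)<0$. Substituting it into the rewritten expression for $I_{\boldsymbol{k}}(s)$ above completes the proof.

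The main obstacle is exactly this analytic continuation across the line $\Re(s)=-1$: when $\Re(s)\le-1$ the individual integrals $\int_0^\infty w^{-s-1}(w+n_i)^{-1}\,dw$ diverge at infinity, so the term-by-term splitting used to evaluate the integral is no longer available and must be replaced by the continuation argument just described. A second, purely bookkeeping point is to verify that the stated domain lies inside $-r<\Re(s)<0$, so that the integral on the right converges for every tuple; here the upper bound $\Re(s)<0$ controls the behaviour at $w=0$, while convergence at $w=\infty$ and absolute convergence of the outer sum (Theorem \ref{main1}) fix the lower bound.
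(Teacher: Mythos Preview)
Your proof is correct and follows essentially the same route as the paper's: partial fractions together with the Beta integral and the reflection formula give the identity on the narrow strip $-1<\Re(s)<0$, after which analytic continuation carries it to the full domain. The only difference lies in how the extension is justified. The paper works globally: it splits the integral $\int_0^\infty w^{-\sigma-1}/\prod_j(w+n_j)\,dw$ at the points $n_0=0,n_1,\ldots,n_r,n_{r+1}=\infty$, bounds each piece, feeds those bounds into Matsumoto's convergence criterion for multiple series, and thereby shows directly that the full right-hand side (the sum over tuples of the integral) converges and is holomorphic on the larger strip before invoking the identity theorem. You instead extend the \emph{local} identity for each fixed tuple to $-r<\Re(s)<0$ by analytic continuation in $s$, and then use the absolute convergence of the left-hand side from Theorem~\ref{main1} to transfer convergence to the right term by term. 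Your variant is a legitimate shortcut that sidesteps the explicit estimates; what the paper's approach buys in exchange is an independent, self-contained verification that the integral expression on the right converges as written.
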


By applying this theorem, we can deduce linear relations among Ohno functions.

\begin{thm}\label{main7}
Let $l$ be a positive integer with $l \le r$ and an admissible index $\boldsymbol{k}=(k_1,\ldots,k_r)$  satisfy
$$\max_{\substack{1\leq j\leq r\\|\boldsymbol{e}|=m\\e_1,\ldots,e_{r}\leq1\\e_l=0}}\{r-2j+2-(k_j+e_j+\cdots+k_r+e_r)\}<-m$$
for all $m=0,\ldots,r-1$.
For   $s\in\mathbb{C}$, we have
\begin{align*}
\sum_{j=0}^{r-1}(-1)^{j}\sum_{\substack{|\boldsymbol{e}|=j\\e_1,\ldots,e_{r}\leq1\\e_l=0}}I_{\boldsymbol{k}+\boldsymbol{e}}(s-j)
=\zeta(k_1,\ldots,k_{l-1},k_l+s,k_{l+1},\ldots,k_r).
\end{align*}
\end{thm}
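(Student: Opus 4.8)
The plan is to collapse the alternating sum on the left to a single Mellin integral by invoking Theorem~\ref{main6} termwise, and then to evaluate that integral by a telescoping product identity. I would carry this out on a suitable strip and extend to all $s$ at the end. Set
\begin{align*}
\sigma_0:=\max_{0\le m\le r-1}\Big(m+\max_{\substack{1\le j'\le r,\ |\boldsymbol{e}|=m\\ e_1,\ldots,e_r\le1,\ e_l=0}}\{r-2j'+2-(k_{j'}+e_{j'}+\cdots+k_r+e_r)\}\Big);
\end{align*}
the hypothesis says exactly that each inner maximum is $<-m$, so $\sigma_0<0$ and the strip $\max(\sigma_0,-1)<\Re(s)<0$ is non-empty. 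For a fixed $\boldsymbol{e}$ with $|\boldsymbol{e}|=j$ one has, on this strip, $\Re(s-j)<0$ and $\Re(s-j)>\sigma_0-j\ge\max_{1\le j'\le r}\{r-2j'+2-((k_{j'}+e_{j'})+\cdots+(k_r+e_r))\}$, so $s-j$ lies in the region of validity of Theorem~\ref{main6}; applying it rewrites each summand $I_{\boldsymbol{k}+\boldsymbol{e}}(s-j)$ as a series-integral.

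Next I would use $\sin(\pi(s-j))=(-1)^{j}\sin(\pi s)$ for $j\in\mathbb{Z}$, so that the factor $-\sin(\pi(s-j))/\pi$ coming from Theorem~\ref{main6} carries a sign $(-1)^{j}$ that cancels the explicit $(-1)^{j}$ in the statement. Because only finitely many pairs $(j,\boldsymbol{e})$ occur, I may interchange the outer finite sum with the series over $0<n_1<\cdots<n_r$ and with the $w$-integral, and absorb $w^{|\boldsymbol{e}|}=\prod_{i\ne l}w^{e_i}$ into the summand, to get
\begin{align*}
\text{(LHS)}=-\frac{\sin(\pi s)}{\pi}\sum_{0<n_1<\cdots<n_r}\frac{1}{n_1^{k_1-1}\cdots n_r^{k_r-1}}\int_0^\infty\frac{w^{-s-1}}{(w+n_1)\cdots(w+n_r)}\Bigg(\sum_{\substack{e_i\in\{0,1\}\\ e_l=0}}\prod_{i\ne l}\Big(\frac{w}{n_i}\Big)^{e_i}\Bigg)\,dw.
\end{align*}

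The key step is that, since $e_l$ is frozen at $0$, the bracketed sum factors as $\prod_{i\ne l}(1+w/n_i)=\prod_{i\ne l}(w+n_i)/n_i$, whose numerator cancels every denominator factor except $w+n_l$. Collecting the powers of the $n_i$ then leaves
\begin{align*}
\text{(LHS)}=-\frac{\sin(\pi s)}{\pi}\sum_{0<n_1<\cdots<n_r}\frac{1}{n_l^{k_l-1}}\prod_{i\ne l}\frac{1}{n_i^{k_i}}\int_0^\infty\frac{w^{-s-1}}{w+n_l}\,dw.
\end{align*}
Evaluating the beta-type integral via $\int_0^\infty w^{a-1}/(w+z)\,dw=\pi z^{a-1}/\sin(\pi a)$ (valid for $0<\Re(a)<1$, $z>0$) with $a=-s$, $z=n_l$ gives $-\pi n_l^{-s-1}/\sin(\pi s)$; this cancels the outer $-\sin(\pi s)/\pi$ and turns $n_l^{-(k_l-1)}n_l^{-s-1}$ into $n_l^{-(k_l+s)}$, so the series collapses to $\zeta(k_1,\ldots,k_{l-1},k_l+s,k_{l+1},\ldots,k_r)$, which is the claimed identity on the strip.

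I expect the genuine difficulty to be the bookkeeping of the strip together with the final continuation, rather than the algebra. One must verify that $\max(\sigma_0,-1)<\Re(s)<0$ simultaneously legitimises every invocation of Theorem~\ref{main6} and the absolute convergence needed for the interchange; the lower cut-off $-1$ is essential and is \emph{not} given by the hypothesis directly, since the single integral $\int_0^\infty w^{-s-1}/(w+n_l)\,dw$ (equivalently the summand with $|\boldsymbol{e}|=r-1$) converges only for $\Re(s)>-1$. The role of the constraint $e_l=0$ is precisely to keep the telescoping product $\prod_{i\ne l}(w+n_i)$ free of the $l$-th factor, so that exactly $w+n_l$ survives. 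Finally, once the identity holds on the strip I would extend it to all $s\in\mathbb{C}$ by analytic continuation: the left-hand side is meromorphic in $s$ via the meromorphic continuation of the Witten zeta functions, and the right-hand side is a one-variable shift of a multiple zeta function, hence also meromorphic, so the identity propagates from the strip to the whole plane.
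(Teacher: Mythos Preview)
Your proposal is correct and follows essentially the same route as the paper: both arguments hinge on Theorem~\ref{main6} applied termwise on the strip $-1<\Re(s)<0$ (the paper checks, just as you do, that the hypothesis on $\boldsymbol{k}$ makes every $s-m$ land in the admissible range), followed by the algebraic identity $\prod_{i\ne l}(w+n_i)=\sum_{j}E_{j}\,w^{\,r-1-j}$ and the beta integral $\int_0^\infty w^{-s-1}/(w+n_l)\,dw=-\pi n_l^{-s-1}/\sin(\pi s)$, with analytic continuation at the end. The only cosmetic difference is direction: the paper starts from the single integral with numerator $\prod_{i\ne l}(w+n_i)$, expands it in elementary symmetric polynomials $E_j$, and recognises each piece as $I_{\boldsymbol{k}+\boldsymbol{e}}(s-m)$, whereas you start from the sum of $I_{\boldsymbol{k}+\boldsymbol{e}}(s-j)$, use $\sin(\pi(s-j))=(-1)^j\sin(\pi s)$, and collapse the inner sum via $\sum_{\boldsymbol{e}}\prod_{i\ne l}(w/n_i)^{e_i}=\prod_{i\ne l}(1+w/n_i)$ --- this is literally the same polynomial identity read the other way.
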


By  theorem \ref{main6}, we can deduce the following corollary.
\begin{cor}\label{cor}
For an admissible index $\boldsymbol{k}=(k_1,\ldots,k_r)$ and a negative integer $n$ with $\max_{1\leq j\leq r}\{r-2j+2-(k_j+\cdots+k_r)\}<n<0$, we have
\begin{align*}
I_{\boldsymbol{k}}(n)=0.
\end{align*}
\end{cor}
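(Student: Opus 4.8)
The plan is to obtain Corollary \ref{cor} as an immediate specialization of Theorem \ref{main6}. The first step is to observe that the hypothesis placed on the negative integer $n$, namely $\max_{1\le j\le r}\{r-2j+2-(k_j+\cdots+k_r)\}<n<0$, is exactly the condition under which the integral representation of Theorem \ref{main6} holds with $s=n$. Hence I may substitute the real value $s=n$ directly into that identity, with no recourse to meromorphic continuation, since $n$ lies in the open strip where the representation is asserted.

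The second step is the elementary observation that $\sin(\pi n)=0$ for every integer $n$. Since the right-hand side of Theorem \ref{main6} is the product of the prefactor $-\sin(\pi s)/\pi$ with the series-integral
\[
\sum_{0<n_1<\cdots<n_r}\frac{1}{n_1^{k_1-1}\cdots n_r^{k_r-1}}\int_0^\infty\frac{w^{-s-1}}{(w+n_1)\cdots(w+n_r)}\,dw,
\]
evaluating at $s=n$ forces the prefactor to vanish, and therefore one expects $I_{\boldsymbol{k}}(n)=0$.

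The one point that genuinely requires justification — and the step I expect to be the only (mild) obstacle — is to rule out an indeterminate $0\cdot\infty$: I must confirm that the series-integral above takes a finite value at $s=n$. This follows because Theorem \ref{main6} is an identity between finite quantities valid on the whole open strip $\max_{1\le j\le r}\{r-2j+2-(k_j+\cdots+k_r)\}<\Re(s)<0$, so the series-integral converges absolutely and defines a finite complex number at the interior point $s=n$. Being finite, it is annihilated by the factor $\sin(\pi n)=0$, which yields $I_{\boldsymbol{k}}(n)=0$ and completes the proof.
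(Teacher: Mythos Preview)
Your argument is correct and is exactly the deduction the paper has in mind: the corollary is stated immediately after Theorem \ref{main6} with only the remark ``By Theorem \ref{main6}, we can deduce the following corollary,'' and your substitution $s=n$ together with $\sin(\pi n)=0$ (and the finiteness of the series-integral, which is established in the proof of Theorem \ref{main6}) is precisely that deduction made explicit.
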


\section{Region of absolute convergence}

We give a precise region of absolute convergence of the series \eqref{ohnofunc}.
It is enough to consider a region of absolute convergence of the Dirichlet series
\begin{align*}
\begin{split}
  I_{\boldsymbol{k},i}(s):=&\sum_{0<n_1<\cdots<n_r} \frac{1}{n_1^{k_1}\cdots n_r^{k_r}}
  \cdot \frac{1}{n_i^{s}} \prod_{j\ne i} \frac{n_j}{ n_j-n_i }\\
    =& (-1)^{i-1} \sum_{m_1,\ldots,m_r=1}^\infty 
   \frac{1}{ m_1^{k_1-1}(m_1+m_2)^{k_2-1}\cdots(m_1+\cdots+m_r)^{k_r-1} } \\
   &\,\,\,\,\qquad \times \frac{1}{ (m_1+\cdots+m_i)^{s+1} } 
    \, \prod_{j<i} \frac{1}{m_{j+1}+\cdots+m_i }
    \, \prod_{j>i} \frac{1}{m_{i+1}+\cdots+m_j },
\end{split}
\end{align*}
for all $1\le i\le r$.
Since the above series is a special case of the zeta function associated with the root system of type $A_r$ for each $i$,
we can apply the result given by Zhao-Zhou \cite[Proposition 2.1]{ZhZh11}.
\begin{thm}[Zhao-Zhou \cite{ZhZh11}] \label{zhzh}
The series
\begin{align}\label{ZZ}
\sum_{m_1,\ldots,m_r=1}^\infty \prod_{\boldsymbol{i}\subseteq[r]}\left(\sum_{j=1}^{{\rm lg}(\boldsymbol{i})}m_{i_j}\right)^{-\sigma_{\boldsymbol{i}}}
\end{align}
converges if and only if for all $\ell=1,\ldots,r$ and $\boldsymbol{i}=(i_1,\ldots,i_\ell)\subseteq[r]$
\begin{align}\label{ZZ1}
\sum_{\substack{\boldsymbol{j}{\rm \;contains\;at\;least\;one\;of}\\ i_1,\ldots,i_\ell}}\sigma_{\boldsymbol{j}}>\ell,
\end{align}
where the product $\prod_{\boldsymbol{i}\subseteq[r]}$ runs over all nonempty subsets of $[r]=(1,2,\ldots,r)$ as a poset and ${\rm lg}(\boldsymbol{i})$ is the length of  $\boldsymbol{i}$.
\end{thm}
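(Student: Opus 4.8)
The plan is to reduce the convergence problem to an elementary count of lattice points in dyadic boxes, and from there to a finite family of geometric series indexed by the orderings of the variables. For a nonempty subset $\boldsymbol{i}=(i_1,\ldots,i_\ell)\subseteq[r]$ write $L_{\boldsymbol{i}}:=m_{i_1}+\cdots+m_{i_\ell}$, so that \eqref{ZZ} is $\sum_{m\in\mathbb{Z}_{\ge1}^r}\prod_{\boldsymbol{i}}L_{\boldsymbol{i}}^{-\sigma_{\boldsymbol{i}}}$; since each $L_{\boldsymbol{i}}$ is a positive real, absolute convergence is governed by the real parts of the $\sigma_{\boldsymbol{i}}$, which I treat as real. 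The first step is the two-sided bound $\max_{j}m_{i_j}\le L_{\boldsymbol{i}}\le r\max_{j}m_{i_j}$, which gives $L_{\boldsymbol{i}}^{-\sigma_{\boldsymbol{i}}}\asymp(\max_{j}m_{i_j})^{-\sigma_{\boldsymbol{i}}}$ with a constant $r^{|\sigma_{\boldsymbol{i}}|}$ that is harmless because there are only finitely many subsets. Hence \eqref{ZZ} converges if and only if $\sum_{m}\prod_{\boldsymbol{i}}(\max_{j\in\boldsymbol{i}}m_{i_j})^{-\sigma_{\boldsymbol{i}}}$ does, and this reduction is valid regardless of the signs of the exponents, so it handles both directions of the equivalence at once.

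Next I would decompose $\mathbb{Z}_{\ge1}^r$ into dyadic boxes $B_a=\{m:2^{a_i}\le m_i<2^{a_i+1}\}$ indexed by $a=(a_1,\ldots,a_r)\in\mathbb{Z}_{\ge0}^r$. On $B_a$ one has $\max_{j\in\boldsymbol{i}}m_{i_j}\asymp 2^{M_{\boldsymbol{i}}(a)}$ with $M_{\boldsymbol{i}}(a):=\max_{j\in\boldsymbol{i}}a_{i_j}$, and $B_a$ contains exactly $2^{a_1+\cdots+a_r}$ lattice points. Therefore the contribution of $B_a$ is comparable to $2^{F(a)}$, where $F(a):=\sum_{i=1}^r a_i-\sum_{\boldsymbol{i}}\sigma_{\boldsymbol{i}}M_{\boldsymbol{i}}(a)$, and the series converges if and only if $\sum_{a\in\mathbb{Z}_{\ge0}^r}2^{F(a)}<\infty$.

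The function $F$ is piecewise linear, being linear on each cone $C_p=\{a:a_{p_1}\ge\cdots\ge a_{p_r}\ge0\}$ attached to a permutation $p$ of $[r]$; these finitely many cones cover $\mathbb{Z}_{\ge0}^r$, so convergence of $\sum_a 2^{F(a)}$ is equivalent to convergence over each $C_p$ separately. On $C_p$ I substitute $b_k:=a_{p_k}-a_{p_{k+1}}$ (with $a_{p_{r+1}}:=0$), so that $a_{p_k}=b_k+\cdots+b_r$ and $b$ ranges freely over $\mathbb{Z}_{\ge0}^r$. A short computation gives $\sum_i a_i=\sum_\ell \ell\, b_\ell$, while $M_{\boldsymbol{i}}(a)=\sum_{\ell\ge k_0(\boldsymbol{i})}b_\ell$, where $k_0(\boldsymbol{i})$ is the smallest position $k$ with $p_k\in\boldsymbol{i}$. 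Collecting coefficients of $b_\ell$ yields $F=\sum_\ell C_\ell b_\ell$ with $C_\ell=\ell-\sum_{\boldsymbol{i}\text{ contains at least one of }p_1,\ldots,p_\ell}\sigma_{\boldsymbol{i}}$. The geometric series $\sum_{b}2^{\sum_\ell C_\ell b_\ell}$ converges if and only if $C_\ell<0$ for every $\ell$, i.e.\ if and only if $\sum_{\boldsymbol{i}\text{ contains at least one of }p_1,\ldots,p_\ell}\sigma_{\boldsymbol{i}}>\ell$. Letting $p$ and $\ell$ vary, the sets $\{p_1,\ldots,p_\ell\}$ run over all nonempty subsets of $[r]$, so this is exactly condition \eqref{ZZ1}.

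I expect the main obstacle to be bookkeeping rather than analysis: namely, verifying the identity $M_{\boldsymbol{i}}(a)=\sum_{\ell\ge k_0(\boldsymbol{i})}b_\ell$ and the resummation $\sum_i a_i=\sum_\ell \ell\, b_\ell$ on each cone, and confirming that the overlaps of the cones $C_p$ along their common boundaries are lower-dimensional and hence negligible for convergence. The one genuinely careful point in the analytic reduction is that the comparison $L_{\boldsymbol{i}}^{-\sigma_{\boldsymbol{i}}}\asymp(\max_j m_{i_j})^{-\sigma_{\boldsymbol{i}}}$ must hold with \emph{two-sided} constants uniformly over all of $\mathbb{Z}_{\ge1}^r$ and over the finitely many subsets simultaneously, so that a single global implied constant controls the whole product; this is what makes the dyadic estimate an honest equivalence and lets the ``if and only if'' fall out without treating the two implications separately.
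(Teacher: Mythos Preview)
The paper does not prove this theorem; it is quoted from Zhao--Zhou \cite{ZhZh11} and used as a black box in the proof of Theorem~\ref{main1}. So there is no ``paper's own proof'' to compare against.

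Your argument is correct and self-contained. The two-sided estimate $\max_{j\in\boldsymbol{i}}m_{i_j}\le L_{\boldsymbol{i}}\le r\max_{j\in\boldsymbol{i}}m_{i_j}$ legitimately reduces the question to the max-version uniformly in both directions; the dyadic decomposition and the cone-by-cone linearisation via $b_k=a_{p_k}-a_{p_{k+1}}$ go through exactly as you describe; and the identities $\sum_i a_i=\sum_\ell \ell\,b_\ell$ and $M_{\boldsymbol{i}}(a)=\sum_{\ell\ge k_0(\boldsymbol{i})}b_\ell$ are immediate once one writes $a_{p_k}=b_k+\cdots+b_r$. The one place I would tighten the exposition is the treatment of the cone boundaries in the ``only if'' direction: rather than saying the overlaps are lower-dimensional and negligible, it is cleaner to note that the map $a\mapsto b$ is a bijection from $C_p\cap\mathbb{Z}_{\ge0}^r$ (with the weak inequalities $a_{p_1}\ge\cdots\ge a_{p_r}$) onto $\mathbb{Z}_{\ge0}^r$, so if any $C_\ell\ge0$ the sum over that single cone already diverges, and since $C_p$ is a subset of the full index set this forces divergence of \eqref{ZZ} without any discussion of overlaps.
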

Note that if we put
\begin{align}\label{sigma}
\sigma_{\boldsymbol{i}}=
\begin{cases}
k_j-1&(\boldsymbol{i}=(1,2,\ldots,j)\mbox{ for } j\ne i),\\
k_i+s&(\boldsymbol{i}=(1,2,\ldots,i)),\\
1&(\boldsymbol{i}=(j+1,\ldots,i)\mbox{ for } j< i\mbox{ or }\boldsymbol{i}=(i+1\ldots,j)\mbox{ for } j> i),\\
0&(\mbox{otherwise}),
\end{cases}
\end{align}
then the series \eqref{ZZ} is $(-1)^{i-1}I_{\boldsymbol{k},i}(s)$.

\begin{proof}[Proof of Theorem \ref{main1}]

In the case $\ell=r$ for Theorem \ref{zhzh}, since $\boldsymbol{i}=(1,\ldots,r)$, the left-hand side of \eqref{ZZ1} is the sum of all $\sigma_{\boldsymbol{i}}$'s in \eqref{sigma}, so we obtain the following condition of absolute convergence
\begin{align}\label{cond1}
k_1+\cdots +k_r+\Re(s)>r.
\end{align}
When $\ell=r-1$, let us consider the case $\boldsymbol{i}=(2,\ldots,r)$ first. In this case, the left-hand side of \eqref{ZZ1} is the sum of all $\sigma_{\boldsymbol{i}}$'s but $\sigma_{(1)}$ in \eqref{sigma}, so we obtain the following conditions for $I_{\boldsymbol{k},i}(s)$:
\begin{align*}
\begin{cases}
k_2+\cdots +k_r+\Re(s)>r-2 &(i\neq1,\ \boldsymbol{i}=(2,\ldots,r)),\\
k_2+\cdots +k_r>r-1 &(i=1,\ \boldsymbol{i}=(2,\ldots,r)).
\end{cases}
\end{align*}
In the case $\boldsymbol{i}=(1,\ldots,i-1,i+1,\ldots,r)$ for $i\neq1$ or $\boldsymbol{i}=(1,\ldots,i,i+2,\ldots,r)$, the left-hand side of \eqref{ZZ1} is the sum of all $\sigma_{\boldsymbol{i}}$'s but $\sigma_{(i)}$ or $\sigma_{(i+1)}$ in \eqref{sigma}, respectively,  so we obtain the following condition:
\begin{align*}
k_1+\cdots +k_r+\Re(s)>r \quad(i\neq1,\ \boldsymbol{i}=(1,\ldots,i-1,i+1,\ldots,r))\ {\rm or}\ (\boldsymbol{i}=(1,\ldots,i,i+2,\ldots,r)).
\end{align*}
Otherwise, we  obtain the following condition:
\begin{align*}
k_1+\cdots +k_r+\Re(s)>r-1\quad(\rm{otherwise}).
\end{align*}
The conditions obtained by the case $\boldsymbol{i}\neq(2,\ldots,r)$ are contained in \eqref{cond1}, hence only the case $\boldsymbol{i}=(2,\ldots,r)$ deduces the region of absolute convergence when $\ell=r-1$.
(In general, it suffices to consider the case $\boldsymbol{i}=(j,j+1,\ldots,r)$ when $\ell=r-j+1$.)
By the similar way, considering all $\ell=1,\ldots,r-2$  for Theorem \ref{zhzh}, the series $I_{\boldsymbol{k},i}(s)$ converges absolutely only for
\begin{align*}
&k_1+\cdots +k_r+\Re(s)>r,\\
&k_2+\cdots +k_r+\Re(s)>r-2,\\
&\cdots\\
&k_i+\cdots+k_r+\Re(s)>r-2i+2
\end{align*}
and
\begin{align*}
&k_{i+1}+\cdots +k_r>r-i,\\
&k_{i+2}+\cdots +k_r>r-i-1,\\
&\cdots\\
&k_r>1.
\end{align*}
Hence, the series \eqref{ohnofunc} is absolutely convergent for
\begin{align*}
&k_1+\cdots +k_r+\Re(s)>r,\\
&k_2+\cdots +k_r+\Re(s)>r-2,\\
&\cdots\\
&k_r+\Re(s)>-r+2
\end{align*}
and
\begin{align*}
&k_{2}+\cdots +k_r>r-1,\\
&k_{3}+\cdots +k_r>r-2,\\
&\cdots\\
&k_r>1.
\end{align*}
Since the index $\boldsymbol{k}$ is admissible, we obtain Theorem \ref{main1}.
\end{proof}

\section{New integral expression}

We need the following lemma to prove Theorem \ref{thm:integral_exp}.
\begin{lem}\label{lamme:integral}
For $r\in \mathbb{Z}_{\ge 1}$, $c_i>0$ $(1\le i \le r)$ with $c_i\neq c_j$ $(i\neq j)$ and $s\in\mathbb{C}$ with $\Re (s) >-1$,
we have	\begin{equation}\label{eq:int_lemma}
\begin{split}
		& \sum_{i=1}^r  \left(   \dfrac{1}{c_i^{s+1}}  \prod_{j\neq i}  \dfrac{1}{c_j-c_i}   \right) \\
		&= \dfrac{1}{\Gamma(s+1)} \int_{0}^{\infty}  \cdots \int_{0}^{\infty}  
		e^{-c_1 x_1-  \cdots -c_r x_r} (x_1+\cdots +x_r)^{s} \, dx_1 \cdots dx_r .
\end{split}	\end{equation}
\end{lem}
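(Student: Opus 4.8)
The plan is to prove Lemma~\ref{lamme:integral} by evaluating the right-hand side directly and showing it equals the left-hand side. The key observation is that the multidimensional integral factors completely once we handle the awkward factor $(x_1+\cdots+x_r)^s$, which couples all the variables together. The natural tool to decouple it is the Gamma-function identity
\begin{align*}
\frac{1}{\Gamma(s+1)}\int_0^\infty e^{-\lambda u}\,u^{s}\,du=\frac{1}{\lambda^{s+1}}\qquad(\Re(\lambda)>0,\ \Re(s)>-1),
\end{align*}
but here the exponent sits on a \emph{sum} rather than a single variable, so a preliminary manipulation is needed.

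First I would introduce an extra scaling variable to reduce the power of a sum to a power of a single variable. Concretely, substitute $x_j=u\,y_j$ (or insert the identity $1=\int_0^\infty\delta(u-\sum x_j)\,du$ and rescale), so that the integral becomes, after writing $u=x_1+\cdots+x_r$,
\begin{align*}
\frac{1}{\Gamma(s+1)}\int_0^\infty u^{s}\!\!\int_{\Delta}e^{-u(c_1y_1+\cdots+c_ry_r)}\,u^{r-1}\,dy\,du,
\end{align*}
where $\Delta$ is the simplex $y_j\ge0$, $\sum y_j=1$. Performing the $u$-integral first via the Gamma identity turns this into a single $(r-1)$-dimensional integral over the simplex with integrand $(c_1y_1+\cdots+c_ry_r)^{-(s+r)}$ times a constant $\Gamma(s+r)/\Gamma(s+1)$. \emph{Alternatively}, and perhaps more cleanly, I would integrate the original variables $x_1,\dots,x_r$ one at a time: since $\int_0^\infty e^{-c_jx_j}\,dx_j$ decouples from the sum only after the $u$-trick, the honest route is to apply the substitution $u=\sum x_j$ up front.

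The heart of the matter is then a \emph{simplex integral}: I must show that
\begin{align*}
\frac{\Gamma(s+r)}{\Gamma(s+1)}\int_{\Delta}\frac{dy_1\cdots dy_{r-1}}{(c_1y_1+\cdots+c_ry_r)^{s+r}}=\sum_{i=1}^r\frac{1}{c_i^{s+1}}\prod_{j\ne i}\frac{1}{c_j-c_i}.
\end{align*}
This is where the distinctness hypothesis $c_i\ne c_j$ enters, and it is the step I expect to be the main obstacle. I would establish it by induction on $r$: the base case $r=1$ is immediate, and for the inductive step I would integrate out one simplex variable (say $y_r$, or equivalently peel off the last coordinate), producing a difference of two $(r-1)$-variable expressions whose denominators differ by which $c_i$ plays the distinguished role; the telescoping partial-fraction structure $\prod_{j\ne i}(c_j-c_i)^{-1}$ is exactly what survives. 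An appealing shortcut that avoids the messy simplex bookkeeping is to recognize the left-hand side of \eqref{eq:int_lemma} as a divided difference: for the function $f(c)=c^{-(s+1)}$, the sum $\sum_i c_i^{-(s+1)}\prod_{j\ne i}(c_j-c_i)^{-1}$ is precisely $(-1)^{r-1}$ times the divided difference $f[c_1,\dots,c_r]$, and the Hermite--Genocchi formula expresses that divided difference as exactly the simplex integral above (the factor $\Gamma(s+r)/\Gamma(s+1)$ accounting for the $(r-1)$-fold differentiation of the power $c^{-(s+1)}$). Invoking Hermite--Genocchi discharges the induction in one line.

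Either way, once the simplex identity is in hand, the lemma follows by assembling the pieces: the $u$-integration contributes the Gamma-factor and converts the exponential product into the negative power of the linear form, and the simplex identity converts that back into the partial-fraction sum. I would present the divided-difference / Hermite--Genocchi argument as the clean proof, relegating the direct inductive evaluation to a remark if space permits, and I would take care to note that the convergence of the $u$-integral at $0$ is guaranteed by $\Re(s)>-1$ while convergence at $\infty$ and on the simplex uses $c_i>0$.
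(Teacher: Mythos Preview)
Your proposal is correct, but it follows a genuinely different route from the paper's proof.

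The paper proves \eqref{eq:int_lemma} by a direct induction on $r$ in the $x$-variables: it applies the triangular change of variables $y_j=x_j+\cdots+x_r$, which turns the orthant $\{x_i>0\}$ into the ordered region $\{y_1>\cdots>y_r>0\}$ with integrand $e^{-c_1y_1}e^{(c_1-c_2)y_2}\cdots e^{(c_{r-1}-c_r)y_r}y_1^{s}$. Integrating out $y_r$ produces a difference of two $(r-1)$-fold integrals of the same shape (one with $c_r$ and one with $c_{r-1}$ in the last slot), and the induction hypothesis together with the elementary identity $\frac{1}{c_{r-1}-c_r}\bigl(\frac{1}{c_r-c_i}-\frac{1}{c_{r-1}-c_i}\bigr)=\frac{1}{(c_r-c_i)(c_{r-1}-c_i)}$ reassembles the full partial-fraction sum.

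Your approach instead performs a radial/simplex decomposition $x_j=u\,y_j$ and integrates $u$ out first, reducing the problem to the simplex integral $\int_{\Delta}(c_1y_1+\cdots+c_ry_r)^{-(s+r)}\,dy$, which you then identify via the Hermite--Genocchi formula as (up to the sign $(-1)^{r-1}$ and the factor $\Gamma(s+r)/\Gamma(s+1)$) the divided difference of $f(c)=c^{-(s+1)}$ at the nodes $c_1,\dots,c_r$. This is a cleaner and more conceptual argument: it explains \emph{why} the partial-fraction expression appears, and it dispatches the inductive bookkeeping in one stroke. The paper's version, by contrast, is entirely self-contained---it needs no named external formula---and its convergence discussion is more explicit. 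If you submit your version, you should either cite Hermite--Genocchi or include the short inductive proof of the simplex identity (integrating out $y_{r-1}$ there is essentially the same telescoping step as the paper's integration of $y_r$), and note that Fubini is justified because $\sum c_jy_j\ge\min_j c_j>0$ on $\Delta$, so the $u$-integral converges absolutely for $\Re(s)>-1$ uniformly in $y$.
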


\begin{proof}
First we prove the integral in the right-hand side of \eqref{eq:int_lemma}
converges for $\Re(s)>-1$.
Let $\sigma:=\Re(s)$.
When $-1<\sigma<0$, we have
\begin{align*}
&\left| \int_{0}^{\infty}  \cdots \int_{0}^{\infty}  
e^{-c_1 x_1-  \cdots -c_r x_r} (x_1+\cdots +x_r)^{s} \, dx_1 \cdots dx_r \right|\\
&\le  \int_{0}^{\infty}  \cdots \int_{0}^{\infty}  
e^{-c_1 x_1-  \cdots -c_r x_r} (x_1+\cdots +x_r)^{\sigma} \, dx_1 \cdots dx_r \\
&\le  \int_{0}^{\infty}  \cdots \int_{0}^{\infty}  
e^{-c_1 x_1-  \cdots -c_r x_r} x_r^{\sigma} \, dx_1 \cdots dx_r \\
&\le  \dfrac{1}{c_1\cdots c_{r-1}}  \int_0^{\infty}
e^{-c_r x_r} x_r^{\sigma} \,  dx_r \\
&= \dfrac{1}{c_1\cdots c_{r-1}}  \dfrac{1}{c_r^{\sigma+1}}
\Gamma(\sigma +1).
\end{align*}
Hence the integral in the right-hand side of \eqref{eq:int_lemma} converges.
When $\sigma \ge 0$, 
we have
\begin{align*}
	&\left| \int_{0}^{\infty}  \cdots \int_{0}^{\infty}  
	e^{-c_1 x_1-  \cdots -c_r x_r} (x_1+\cdots +x_r)^{s} \, dx_1 \cdots dx_r \right|\\
	&\le \int_{0}^{\infty}  \cdots \int_{0}^{\infty}  
e^{-c_1 x_1-  \cdots -c_r x_r} (x_1+1)^{\sigma}\cdots (x_r+1)^{\sigma} \, dx_1 \cdots dx_r \\
	&= e^{c_1} \cdots e^{c_r} \int_{1}^{\infty}  \cdots \int_{1}^{\infty}  
e^{-c_1 x_1-  \cdots -c_r x_r} x_1^{\sigma}\cdots x_r^{\sigma} \, dx_1 \cdots dx_r \\
	&\le e^{c_1} \cdots e^{c_r} \dfrac{1}{c_1^{\sigma+1}\cdots c_r^{\sigma+1}} 
	\Gamma(\sigma+1)^r.
\end{align*}
Hence the integral in the right-hand side of \eqref{eq:int_lemma}
also converges in this case.

Next we prove the equation \eqref{eq:int_lemma} by induction on $r$.
When $r=1$, the right-hand side of  \eqref{eq:int_lemma} equals
\begin{align*}
\dfrac{1}{\Gamma(s+1)}\int_0^{\infty}e^{-c_1x} x^s\, dx
= \dfrac{1}{c_1^{s+1}}\end{align*}
and the equation \eqref{eq:int_lemma} holds.
Suppose $r\ge 2$ and
\eqref{eq:int_lemma} holds for $r-1$ variables.
By the change of variables 
\begin{align}
\begin{cases}\label{CoV}
x_1+\cdots +x_r=y_1, \\ 
x_2+\cdots +x_r=y_2, \\ 
\vdots  \\
x_r=y_r,
\end{cases}
\end{align}
we have
\begin{align*}
& \int_{0}^{\infty}  \cdots \int_{0}^{\infty}  
e^{-c_1 x_1-  \cdots -c_r x_r} (x_1+\cdots +x_r)^{s} \, dx_1 \cdots dx_r \\
&=
 \int \cdots \int_{y_1>\cdots >y_r\ge0}  
e^{-c_1 y_1} e^{(c_1-c_2)y_2}  \cdots   e^{(c_{r-1}-c_r)y_r} y_1^{s} \, dy_1 \cdots dy_r \\
&=
\int \cdots \int_{y_1>\cdots >y_{r-1}>0}  
e^{-c_1 y_1} e^{(c_1-c_2)y_2}  \cdots  e^{(c_{r-2}-c_{r-1})y_{r-1}} \dfrac{e^{(c_{r-1}-c_r)y_{r-1}}-1}{c_{r-1}-c_r} y_1^{s} \, dy_1 \cdots dy_{r-1} \\
&=
\int \cdots \int_{y_1>\cdots >y_{r-1}>0}  
e^{-c_1 y_1} e^{(c_1-c_2)y_2}  \cdots  e^{(c_{r-3}-c_{r-2})y_{r-2}} \dfrac{e^{(c_{r-2}-c_r)y_{r-1}} - e^{(c_{r-2}-c_{r-1})y_{r-1}}}{c_{r-1}-c_r} 
 y_1^{s} \, dy_1 \cdots dy_{r-1}.
\end{align*}
By the change of variables \eqref{CoV} with $x_r=y_r=0$, we have
\begin{align*}
&\int \cdots \int_{y_1>\cdots >y_{r-1}>0}  
e^{-c_1 y_1} e^{(c_1-c_2)y_2}  \cdots  e^{(c_{r-3}-c_{r-2})y_{r-2}} \dfrac{e^{(c_{r-2}-c_r)y_{r-1}} - e^{(c_{r-2}-c_{r-1})y_{r-1}}}{c_{r-1}-c_r} 
 y_1^{s} \, dy_1 \cdots dy_{r-1}\\
 &=\dfrac{1}{c_{r-1}-c_r} \int_{0}^{\infty}  \cdots \int_{0}^{\infty}  
e^{-c_1 x_1-  \cdots -c_{r-2} x_{r-2}-c_r x_{r-1}} (x_1+\cdots +x_{r-1})^{s} \, dx_1 \cdots dx_{r-1} \\
&\quad-\dfrac{1}{c_{r-1}-c_r} \int_{0}^{\infty}  \cdots \int_{0}^{\infty}  
e^{-c_1 x_1-  \cdots -c_{r-2} x_{r-2}-c_{r-1} x_{r-1}} (x_1+\cdots +x_{r-1})^{s} \, dx_1 \cdots dx_{r-1} .
\end{align*}
By the induction hypothesis, this equals
\begin{align*}
&\dfrac{\Gamma(s+1)}{c_{r-1}-c_r}
\Bigg(  \sum_{i=1}^{r-2} \left(  \frac{1}{c_i^{s+1}}  \frac{1}{c_r-c_i} \prod_{ \substack{j\neq i \\ 1\le j \le r-2} } \frac{1}{c_j-c_i}  \right)
+  \frac{1}{c_r^{s+1}} \prod_{ \substack{ 1\le j \le r-2} } \frac{1}{c_j-c_r} \\
&-  \sum_{i=1}^{r-2} \left(  \frac{1}{c_i^{s+1}}  \frac{1}{c_{r-1}-c_i} \prod_{ \substack{j\neq i \\ 1\le j \le r-2} } \frac{1}{c_j-c_i}  \right)
-   \frac{1}{c_{r-1}^{s+1}} \prod_{ \substack{ 1\le j \le r-2} } \frac{1}{c_j-c_{r-1}}
\Bigg) \\
&= \Gamma(s+1) \left( \sum_{i=1}^{r-2} \frac{1}{c_i^{s+1}}  \prod_{ \substack{j\neq i \\ 1\le j \le r} } \frac{1}{c_j-c_i} 
+ \frac{1}{c_r^{s+1}}  \prod_{ \substack{j\neq r \\ 1\le j \le r-1} } \frac{1}{c_j-c_r} 
+ \frac{1}{c_{r-1}^{s+1}}  \prod_{ \substack{j\neq r-1 \\ 1\le j \le r} } \frac{1}{c_j-c_{r-1}} 
\right) \\
&= \Gamma(s+1) \sum_{i=1}^{r} \frac{1}{c_i^{s+1}}  \prod_{ \substack{j\neq i \\ 1\le j \le r} } \frac{1}{c_j-c_i}
\end{align*}
and this proves that \eqref{eq:int_lemma} holds for $r$.
\end{proof}

\begin{proof}[Proof of Theorem \ref{thm:integral_exp}]
	We set 
	 \[
  \boldsymbol{k}=(\underbrace{1,\ldots,1}_{a_1-1},b_1+1,\dots,\underbrace{1,\ldots,1}_{a_d-1},b_d+1) = (k_1,\ldots , k_r).
 \]
Note that $a_1+\cdots +a_d=r$.
	By definition, we have
	\begin{align*}
		I_{\boldsymbol{k}}(s)=
		\sum_{0<n_1< \cdots <n_r} 
		\dfrac{1}{n_1^{k_1-1} n_2^{k_2-1} \cdots n_r^{k_r-1} }
		\sum_{i=1}^r \left( \dfrac{1}{n_i^{s+1}}    \prod_{ j\neq i }\dfrac{1}{n_j- n_i } \right).
	\end{align*}
	Because
	$(k_1-1, \ldots ,k_r-1)=
	( \underbrace{0,\ldots ,0}_{a_1-1}, b_1, \ldots ,  \underbrace{0,\ldots ,0}_{a_d-1}, b_d )$, 
	we have
	\begin{align*}
		I_{\boldsymbol{k}}(s)=
		\sum_{0<n_1< \cdots <n_r} 
		\dfrac{1}{n_{a_1}^{b_1} n_{a_1+a_2}^{b_2} \cdots n_{a_1+\cdots + a_d}^{b_d} }
		\sum_{i=1}^r \left( \dfrac{1}{n_i^{s+1}}    \prod_{ j\neq i }\dfrac{1}{n_j- n_i } \right).
	\end{align*}
	By using the well-known identity
	\[ \displaystyle \dfrac{1}{n^b} = \dfrac{1}{\Gamma(b)} \int_0^{\infty} e^{-ny} y^{b-1}dy\ \ \ (n, \, b>0)\]
	and Lemma \ref{lamme:integral}, we get 
	\begin{align*} 
		I_{\boldsymbol{k}}(s)
		&= \dfrac{1}{\Gamma(s+1) (b_1-1)! \cdots (b_d-1)!} \\
		&\sum_{0<n_1< \cdots <n_r}
		\int_0^{\infty} \cdots \int_0^{\infty} e^{-n_{a_1}y_1} e^{-n_{a_1+a_2}y_2}\cdots e^{-n_{a_1+\cdots +a_d}y_d}
		y_1^{b_1-1} \cdots y_d^{b_d-1}\,dy_1\cdots dy_d\\
		&\hspace{110pt}e^{-n_1x_1}  e^{-n_2x_2}\cdots e^{-n_{r}x_r}(x_1+\cdots +x_r)^s \,dx_1\cdots dx_r.
	\end{align*}
	Set $n_i= m_1+\cdots +m_i$ $(1\le i \le r)$.
	Then each $m_i$ runs over all positive integers and
	\begin{align*} 
		I_{\boldsymbol{k}}(s)
		=& \dfrac{1}{\Gamma(s+1) (b_1-1)! \cdots (b_d-1)!} \\
		&\sum_{m_1\ge 1, \ldots ,  m_r\ge 1}
		\int_0^{\infty} \cdots \int_0^{\infty} 
		\exp\left(-\sum_{i=1}^{r}m_i(X_i+Y_i)\right)\\
		&\hspace{40pt} (x_1+\cdots +x_r)^s y_1^{b_1-1} \cdots y_d^{b_d-1}
		 \,dx_1\cdots dx_r\,dy_1\cdots dy_d\\
		=& \dfrac{1}{\Gamma(s+1) (b_1-1)! \cdots (b_d-1)!} \\
&\int_0^{\infty} \cdots \int_0^{\infty} 
\prod_{i=1}^r \dfrac{e^{-X_i-Y_i}}{1-e^{-X_i-Y_i}}\\
&\hspace{40pt} (x_1+\cdots +x_r)^sy_1^{b_1-1} \cdots y_d^{b_d-1}
\,dx_1\cdots dx_r\,dy_1\cdots dy_d,
	\end{align*}
where
\begin{align*} 
X_i&=x_i+\cdots+x_r\ \ (1\le i \le r),\\
Y_i&=\begin{cases}
      y_1+\cdots +y_d &(1\le i \le a_1),\\
      y_2+\cdots +y_d &(a_1< i \le a_1+a_2),\\
      \vdots\\
      y_d &(a_1+\cdots +a_{d-1}< i \le r).
  	\end{cases}
\end{align*} 

We apply the following change of variables:
\[ \begin{cases}
	t^{(1)}_{1}&= \exp(-X_1-Y_{a_1}),\\
	\vdots &\hspace{20pt}\vdots \\
	t^{(1)}_{a_1}&= \exp(-X_{a_1}-Y_{a_1}),\\
	u_1&= \exp(-X_{a_1+1}-Y_{a_1}),\end{cases}\ \ \ \ \ 
\begin{cases}
t^{(2)}_{1}&= \exp(-X_{a_1+1}-Y_{a_1+a_2}),\\
\vdots &\hspace{20pt}\vdots \\
t^{(2)}_{a_2}&= \exp(-X_{a_1+a_2}-Y_{a_1+a_2}),\\
u_2&= \exp(-X_{a_1+a_2+1}-Y_{a_1+a_2}),\end{cases}\] 

\[ \hspace{20pt} \cdots 
\begin{cases}
t^{(d)}_{1}&= \exp(-X_{a_1+\cdots +a_{d-1}+1}-Y_r),\\
\vdots &\hspace{20pt}\vdots \\
t^{(d)}_{a_d}&= \exp(-X_{r}-Y_r),\\
u_d&= \exp(-Y_r).\end{cases}\]

Then it can be easily checked that 
\begin{itemize}
\item $0<t^{(1)}_{1}<\cdots <t^{(1)}_{a_1}<u_1 < \cdots  \cdots < t^{(d)}_{1} <\cdots < t^{(d)}_{a_d} < u_d<1$,
\item $\displaystyle \prod_{i=1}^r \dfrac{e^{-X_i-Y_i}}{1-e^{-X_i-Y_i}}
=\dfrac{t_1^{(1)}}{1-t_1^{(1)}} \cdots 
\dfrac{t_{a_d}^{(d)}}{1-t_{a_d}^{(d)}}$,
\item  $y_1=\log \dfrac{t_1^{(2)}}{u_1},\ \  \ldots ,\ \ 
y_{d-1}=\log \dfrac{t_1^{(d)}}{u_{d-1}},\ \  
y_d=\log \dfrac{1}{u_d}$, 
\item $x_1+\cdots +x_r= \log \dfrac{u_1}{t_1^{(1)}}
\dfrac{u_2}{t_1^{(2)}} \cdots \dfrac{u_d}{t_1^{(d)}}$,
\item $ dx_1 \cdots dx_r dy_1\cdots dy_d = \dfrac{1}{t^{(1)}_{1}\cdots t^{(d)}_{a_d} u_1 \cdots u_d} dt^{(1)}_{1}\cdots dt^{(d)}_{a_d} du_1 \cdots du_d$.
\end{itemize}

Therefore 
	\begin{align*} 
		I_{\boldsymbol{k}}(s)
		=& \dfrac{1}{\Gamma(s+1) (b_1-1)! \cdots (b_d-1)!} 
		\int_{0<t^{(1)}_{1}<\cdots <t^{(1)}_{a_1}<u_1 < \cdots < t^{(d)}_{1} <\cdots < t^{(d)}_{a_d} < u_d<1}\\
		&\dfrac{dt^{(1)}_{1}\cdots dt^{(d)}_{a_d} du_1 \cdots du_d}{ (1-t^{(1)}_{1}) \cdots (1-t^{(1)}_{a_1}) u_1  \cdots (1-t^{(d)}_{1}) \cdots 
			(1-t^{(1)}_{a_d})u_d  }\\
		&\left(  \log \dfrac{t^{(2)}_{1}}{u_1}   \right)^{b_1-1}  \cdots \left(  \log \dfrac{t^{(d)}_{1}}{u_{d-1}}   \right)^{b_{d-1}-1}
		\left(  \log \dfrac{1}{u_{d}}   \right)^{b_{d}-1}
		\left(  \log \dfrac{u_1}{t^{(1)}_{1}} \dfrac{u_2}{t^{(2)}_{1}}   \cdots \dfrac{u_d}{t^{(d)}_{1}}   \right)^{s}
		\\
		=& \dfrac{1}{\Gamma(s+1)  (a_1-1)! \cdots (a_d-1)! (b_1-1)! \cdots (b_d-1)!} 
		\int_{0<t_1<u_1 < \cdots < t_{d}< u_d<1}\\
		&\dfrac{dt_1\cdots dt_d du_1\cdots du_d}{ (1-t_{1})  u_1  \cdots (1-t_d) u_d  }\\
		& \left(  \log \dfrac{1-t_{1}}{1-u_1}   \right)^{a_1-1} \cdots  \left(  \log \dfrac{1-t_{d}}{1-u_d}   \right)^{a_d-1} \\
		&\left(  \log \dfrac{t_{2}}{u_1}   \right)^{b_1-1}  \cdots \left(  \log \dfrac{t_{d}}{u_{d-1}}   \right)^{b_{d-1}-1}
		\left(  \log \dfrac{1}{u_{d}}   \right)^{b_{d}-1}
		\left(  \log \dfrac{u_1}{t_{1}} \dfrac{u_2}{t_{2}}   \cdots \dfrac{u_d}{t_{d}}   \right)^{s}.
	\end{align*}
	Here $t^{(i)}_1$ is replaced by $t_i$ in the last equation.
	This completes the proof.
\end{proof}

\section{New Proof of Theorem \ref{interpolation}}
In this section, we give another proof of $I_{\boldsymbol{k}}(s) =I_{\boldsymbol{k^{\dagger}}}(s)$
by using Theorem \ref{thm:integral_exp}.

Let $d$ be a positive integer. We consider the following change of variables:
	\begin{align*}
	\dfrac{1-t_{2\ell-1}}{1-t_{2\ell}} = \dfrac{u_{2(d-\ell+1)+1}} {u_{2(d-\ell+1)}}, \ \ \ 
	\dfrac{t_{2\ell}}{t_{2\ell+1}} = \dfrac{1-u_{2(d-\ell+1)}} {1-u_{2(d-\ell+1)-1}}\ \ \ \ \ (1\le \ell \le d).
	\end{align*}
Here we set $u_{2d+1}=t_{2d+1}=1$.

\begin{rem}
The change of variables above is a special case of that of Ulanskii \cite{U}.
By using this change of variables, he gave a direct proof of Ohno's relation for MZVs.
\end{rem}
This change of variables satisfies the following properties (cf. \cite[Sect.~2]{U}): 	
	\begin{enumerate}
	\item the region $0<t_1 < \cdots <t_{2d}<1$ corresponds to
the region $0<u_1 < \cdots <u_{2d}<1$,
	
	\item $\dfrac{dt_1 \cdots dt_{2d}}{(1-t_1)t_2  \cdots (1-t_{2d-1})t_{2d} } = 
	\dfrac{du_1 \cdots du_{2d}}{(1-u_1)u_2  \cdots (1-u_{2d-1})u_{2d} }$,
	
	\item $\dfrac{t_2}{t_1} \cdots \dfrac{t_{2d}}{t_{2d-1}} = \dfrac{u_2}{u_1} \cdots \dfrac{u_{2d}}{u_{2d-1}}$.
\end{enumerate}
	
By this change of variables, we have
	\begin{align*}
	&\int_{0<t_1<\cdots < t_{2d}<1}  \dfrac{dt_1 \cdots dt_{2d}}{(1-t_1)t_2  \cdots (1-t_{2d-1})t_{2d} }\\ 
	&\times \left(  \log  \dfrac{1-t_1}{1-t_2}   \right)^{a_1-1}
	\left(  \log \dfrac{t_3}{t_2}   \right)^{b_1-1}
	\cdots 
	\left( \log  \dfrac{1-t_{2d-1}}{1-t_{2d}}   \right)^{a_{d}-1}
	\left( \log  \dfrac{1}{t_{2d}}   \right)^{b_{d}-1} 
	\left( \log  \dfrac{t_2 \cdots t_{2d} }{t_1 \cdots t_{2d-1}}  \right)^{s} 
	\\
	=& \int_{0<u_1<\cdots < u_{2d}<1}  \dfrac{du_1 \cdots du_{2d}}{(1-u_1)u_2  \cdots (1-u_{2d-1})u_{2d} }\\ 
	&\hspace{20pt}\times
	\left(  \log \dfrac{1}{u_{2d}}   \right)^{a_1-1}  
	\left(  \log \dfrac{1-u_{2d-1}}{1-u_{2d}}   \right)^{b_1-1}
	\cdots 
	\left( \log  \dfrac{u_3}{u_2}   \right)^{a_{d}-1}
	\left(  \log \dfrac{1-u_1}{1-u_2}   \right)^{b_{d}-1}
	\left( \log  \dfrac{u_2 \cdots u_{2d} }{u_1 \cdots u_{2d-1}}\ \right)^{s}.
	\end{align*}
By multiplying the gamma factors, we have
$I_{\boldsymbol{k}}(s) =I_{\boldsymbol{k^{\dagger}}}(s)$.

\section{$T$-interpolation of Ohno function}

For an admissible index $\boldsymbol{k}=(k_1,\ldots ,k_r)$,
Yamamoto \cite{Y} introduced an
interpolated multiple zeta value $\zeta^T(\boldsymbol{k})$ as
\begin{align*}
\zeta^T(\boldsymbol{k}) = 
\sum_{\boldsymbol{p}} T^{r-\text{dep}(\boldsymbol{p})} 
\zeta(\boldsymbol{p}) \ \in \mathbb{R}[T].
\end{align*}
Here the sum runs over all indices $\boldsymbol{p}$ 
such that 
\[\boldsymbol{p} =(k_1 \square k_2 \square \cdots \square k_r), \]
where each $\square$ is filled by the comma $,$ or the plus $+$.
This polynomial in $T$ interpolates two kinds of multiple zeta values,
i.e., 
$\zeta^{0}(\boldsymbol{k}) =\zeta(\boldsymbol{k})$ and 
$\zeta^{1}(\boldsymbol{k}) =\zeta^{\star}(\boldsymbol{k})$,
where $\zeta^{\star}(\boldsymbol{k})$ is the multiple zeta-star value
defined by
\begin{align*}
	\zeta^{\star}(k_1,\ldots, k_r)
	:=\sum_{1\le n_1\le \cdots \le n_r} \frac {1}{n_1^{k_1}\cdots n_r^{k_r}}.
\end{align*}
Yamamoto proved the following sum formula for $\zeta^T(\boldsymbol{k})$,
which is an interpolation of the sum formulas 
for multiple zeta and zeta-star values.
\begin{thm}[{\cite[Theorem 1.1]{Y}}]\label{thm:t-sum}
For integers $m$, $a \ge 1$, we have
	\begin{align*}
		\sum_{\substack{ {\rm wt}(\boldsymbol{k})=m+a+1 \\  \boldsymbol{k}:{\,\rm admissible},\,{\rm dep}(\boldsymbol{k})=a }} \zeta^T(\boldsymbol{k})
		&= \sum_{j=0}^{a-1} \binom{m+a}{j} T^j (1-T)^{a-1-j} \zeta(m+a+1) \\
		&= \sum_{i=0}^{a-1} \binom{m+i}{i} T^i  \zeta(m+a+1).
	\end{align*}
\end{thm}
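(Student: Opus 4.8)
The plan is to reduce this interpolated sum formula to the classical sum formula for multiple zeta values (Granville--Zagier) by a combinatorial regrouping, and then to pass between the two displayed expressions by an elementary binomial identity. Write $w:=m+a+1$ for the common weight and expand, by definition, $\zeta^T(\boldsymbol{k})=\sum_{\boldsymbol{p}}T^{a-\dep(\boldsymbol{p})}\zeta(\boldsymbol{p})$, where $\boldsymbol{p}$ runs over the indices obtained from $\boldsymbol{k}$ by filling each $\square$ with a comma or a plus. First I would substitute this into the left-hand side and interchange the order of summation so that the outer sum runs over the possible merged indices $\boldsymbol{q}=(q_1,\dots,q_b)$ (admissible, weight $w$, depth $b$), while the inner sum collects all admissible depth-$a$ indices $\boldsymbol{k}$ refining $\boldsymbol{q}$; each such $\boldsymbol{k}$ contributes $T^{a-b}$, since exactly $a-b$ pluses are used.

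The key step is to count, for a fixed admissible $\boldsymbol{q}$, the number $N(\boldsymbol{q},a)$ of admissible depth-$a$ refinements. Refining $\boldsymbol{q}$ amounts to writing each $q_i$ as an ordered sum of $c_i\ge1$ positive parts with $c_1+\cdots+c_b=a$; a composition of $q_i$ into $c_i$ parts can be chosen in $\binom{q_i-1}{c_i-1}$ ways, except that for the last block the admissibility $k_a\ge2$ forces the final part to be $\ge2$, which I would check gives $\binom{q_b-2}{c_b-1}$ choices. Summing the product of these binomials over all $(c_1,\dots,c_b)$ with $c_1+\cdots+c_b=a$ and applying Vandermonde's convolution collapses the count to
\[
N(\boldsymbol{q},a)=\binom{w-b-1}{a-b},
\]
which depends only on the depth $b$ (and on $w$), not on the individual entries of $\boldsymbol{q}$. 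This independence is exactly what makes the classical sum formula applicable, and is, I expect, the crux of the argument; the bookkeeping for the admissibility constraint on the last block is the most delicate point.

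With the multiplicity factored out, the left-hand side becomes $\sum_{b=1}^{a}\binom{w-b-1}{a-b}T^{a-b}\bigl(\sum_{\boldsymbol{q}}\zeta(\boldsymbol{q})\bigr)$, the inner sum being over admissible indices of weight $w$ and depth $b$. Invoking the sum formula $\sum_{\boldsymbol{q}}\zeta(\boldsymbol{q})=\zeta(w)$ (valid for $1\le b\le w-1$, which holds since $b\le a\le w-1$) and reindexing by $p=a-b$ yields $\zeta(m+a+1)\sum_{p=0}^{a-1}\binom{m+p}{p}T^{p}$, the second displayed expression. Finally, the first expression follows from the elementary identity $\sum_{i=0}^{a-1}\binom{m+i}{i}T^i=\sum_{j=0}^{a-1}\binom{m+a}{j}T^j(1-T)^{a-1-j}$, which I would prove by noting that both sides are polynomials of degree $\le a-1$ that coincide with the truncation of $(1-T)^{-(m+1)}$ modulo $T^{a}$ and hence are equal. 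Thus the whole statement reduces to the classical sum formula together with one explicit convolution and one binomial identity.
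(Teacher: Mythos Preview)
Your argument is correct. The Vandermonde step giving $N(\boldsymbol{q},a)=\binom{w-b-1}{a-b}$ is clean, the reindexing $p=a-b$ recovers the second displayed form, and your justification of the binomial identity (both sides are degree~$\le a-1$ polynomials congruent to $(1-T)^{-(m+1)}$ modulo $T^a$, since $\sum_{j=0}^{m+a}\binom{m+a}{j}T^j(1-T)^{m+a-j}=1$ and the tail $j\ge a$ is divisible by $T^a$) is valid.

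However, the route is quite different from the paper's. The paper does not prove Theorem~\ref{thm:t-sum} directly; it quotes it from \cite{Y} and then establishes the stronger statement $I^T_{\boldsymbol{a}}(s)=\bigl(\sum_{i=0}^{a-1}\binom{s+i}{i}T^i\bigr)\zeta(s+a+1)$ for complex $s$, by the integral representation of $I^T_{\boldsymbol{a}}(s)$ together with the Ulanskii-type change of variables used in Section~4, and recovers Theorem~\ref{thm:t-sum} by specializing $s=m$. So the paper's proof is analytic and yields a one-parameter interpolation for free, whereas your proof is purely combinatorial and rests on the classical Granville--Zagier sum formula as a black box. Your approach is more elementary and transparent for integer $m$, but it does not extend to complex $s$ (the counting argument has no obvious analytic continuation); conversely, the paper's method gives the complex-$s$ identity immediately, at the cost of the machinery of Theorem~\ref{thm:integral_exp} and the simplex change of variables.
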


For $a\in \mathbb{Z}_{\ge 1}$,
let $\boldsymbol{a} = ( \underbrace{1,\ldots ,1}_{a-1}, 2)$.
We define an interpolated version of $I_{\boldsymbol{a}}(s)$ as
\begin{align*}
	I^T_{\boldsymbol{a}}(s) &:=\dfrac{1}{(a-1)! \Gamma (s+1) }\\
	&\times \int_{0<t_1<t_{2}<1}  \dfrac{dt_1  dt_{2}}{(1-t_1)t_2 } \left(  \log  \dfrac{1-t_1}{1-t_2}  + T \log \dfrac{t_2}{t_1}  \right)^{a-1}
	\left( \log  \dfrac{t_2 }{t_1 }  \right)^{s}\ \ \ (\Re(s)>-1). \end{align*}
When $T=0$, we have $I^0_{\boldsymbol{a}}(s) = I_{\boldsymbol{a}}(s)$.
When $s=m \in \mathbb{Z}_{\ge 0}$, the value
$I^T_{\boldsymbol{a}}(m)$ is the sum of all interpolated multiple zeta values
for fixed weight and depth:
\[ I^T_{\boldsymbol{a}}(m) = 
	\sum_{|\boldsymbol{e}|=m} \zeta^{T}(\boldsymbol{a}\oplus \boldsymbol{e})
	=\sum_{\substack{ {\rm wt}(\boldsymbol{k})=m+a+1 \\  \boldsymbol{k}: \text{admissible}, \text{dep} (\boldsymbol{k})=a }} \zeta^T(\boldsymbol{k}).
	\]

We can give the following formula, 
which is an interpolation of 
the sum formula for $T$-interpolated multiple zeta values.
In fact, this theorem deduces Theorem \ref{thm:t-sum} by setting $s=m$.
Since the proof is same as that in the last section, we omit it.
\begin{thm}
For $s\in \mathbb{C}$ and $a\in\mathbb{Z}_{\ge 1}$, we have
	\begin{align*}
		I^T_{\boldsymbol{a}}(s)=
		\left(  \sum_{i=0}^{a-1}  \binom{s+i}{i} T^i  \right) \zeta(s+a+1).
	\end{align*}
\end{thm}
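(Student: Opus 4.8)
The plan is to expand the $(a-1)$-st power in the definition of $I^T_{\boldsymbol{a}}(s)$ by the binomial theorem, recognize each resulting double integral as a single ordinary Ohno function via Theorem \ref{thm:integral_exp}, and then collapse each such Ohno function to a Riemann zeta value using the interpolation Theorem \ref{interpolation}. Writing $L_1 = \log\frac{1-t_1}{1-t_2}$ and $L_2 = \log\frac{t_2}{t_1}$, I would first insert
\[
(L_1 + T L_2)^{a-1} = \sum_{i=0}^{a-1}\binom{a-1}{i}T^i\,L_1^{\,a-1-i}L_2^{\,i}
\]
into the integrand, so that $I^T_{\boldsymbol{a}}(s)$ becomes $\frac{1}{(a-1)!\,\Gamma(s+1)}$ times the sum over $i$ of $\binom{a-1}{i}T^i$ against $\int_{0<t_1<t_2<1}\frac{dt_1\,dt_2}{(1-t_1)t_2}\,L_1^{\,a-1-i}L_2^{\,s+i}$.

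Next I would match the $i$-th integral against Theorem \ref{thm:integral_exp} in the case $d=1$, $a_1=a-i$, $b_1=1$: for the index $(\underbrace{1,\ldots,1}_{a-1-i},2)$ the factor $(\log\frac1{t_2})^{b_1-1}$ is absent, the exponent of $L_1$ is $a_1-1=a-1-i$, and replacing $s$ by $s+i$ (legitimate since $\Re(s+i)\ge\Re(s)>-1$) produces exactly the weight $L_2^{\,s+i}$. Hence that integral equals $(a-i-1)!\,\Gamma(s+i+1)\,I_{(\underbrace{1,\ldots,1}_{a-1-i},2)}(s+i)$. Collecting prefactors and using $\binom{a-1}{i}(a-i-1)!=(a-1)!/i!$ together with $\Gamma(s+i+1)/\Gamma(s+1)=(s+1)\cdots(s+i)$, the scalar in front of the $i$-th term simplifies precisely to $\binom{s+i}{i}$, giving
\[
I^T_{\boldsymbol{a}}(s)=\sum_{i=0}^{a-1}\binom{s+i}{i}T^i\,I_{(\underbrace{1,\ldots,1}_{a-1-i},2)}(s+i).
\]

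Finally I would evaluate each Ohno function by duality. The dual of $(\underbrace{1,\ldots,1}_{a-1-i},2)$ is the depth-one index $(a-i+1)$, and a depth-one Ohno function degenerates directly from \eqref{ohnofunc} to $I_{(k)}(s')=\zeta(k+s')$. By Theorem \ref{interpolation} (equivalently, by running the $d=1$ change of variables of the previous section, which turns $L_1$ into $\log\frac1{u_2}$ and $L_2$ into $\log\frac{u_2}{u_1}$), one gets $I_{(\underbrace{1,\ldots,1}_{a-1-i},2)}(s+i)=I_{(a-i+1)}(s+i)=\zeta(s+a+1)$, a value that does not depend on $i$. Factoring it out yields the claimed identity for $\Re(s)>-1$, and it extends to all $s\in\mathbb{C}$ since each Ohno function continues meromorphically. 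The steps are routine once the binomial expansion is set up; the only genuine care is in the prefactor bookkeeping that turns the factorial and gamma factors into $\binom{s+i}{i}$, and in the key observation that duality collapses every summand to the \emph{same} depth-one value $\zeta(s+a+1)$---this is exactly what lets the zeta value be pulled out of the sum.
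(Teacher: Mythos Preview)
Your proposal is correct and is essentially the argument the paper has in mind. The paper omits the proof, saying it is ``the same as that in the last section''; your binomial expansion followed by the $d=1$ case of Theorem~\ref{interpolation} (equivalently, the Ulanskii change of variables turning $L_1,L_2$ into $\log\frac{1}{u_2},\log\frac{u_2}{u_1}$) is exactly that, and your prefactor bookkeeping producing $\binom{s+i}{i}$ is accurate.
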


\section{New Relations}
We first prove Theorem \ref{main6}.

\begin{proof}[Proof of Theorem \ref{main6}]
By the partial fraction decomposition, we have
\begin{align*}
\frac{1}{(w+n_1)\cdots(w+n_r)}=\sum_{i=1}^r \frac{1}{w+n_i}\cdot\prod_{j\ne i} \frac{1}{ n_j-n_i }.
\end{align*}
Let $B(x,y)$ be the beta function. Since
$$
\int_0^\infty\frac{w^{-s}}{w+n}dw=n^{-s}B(s,1-s)=n^{-s}\frac{\pi}{\sin(\pi s)}
$$
for $0<\Re(s)<1$, we have
\begin{align*}
\int_0^\infty\frac{w^{-s-1}}{(w+n_1)\cdots(w+n_r)}dw=-\frac{\pi}{\sin(\pi s)}\sum_{i=1}^r \frac{1}{n_i^{s+1}}\prod_{j\ne i} \frac{1}{ n_j-n_i }.
\end{align*}
Therefore, the statement of theorem holds for $-1<\Re(s)<0$.

For $-r<\Re(s)<0$ and $0\le i\le r$, we have
\begin{align*}
\int_{n_i}^{n_{i+1}}\frac{w^{-\sigma-1}}{(w+n_1)\cdots(w+n_r)}dw&\le \frac{1}{n_{i+1}n_{i+2}\cdots n_r}\int_{n_i}^{n_{i+1}}w^{-\sigma-i-1}dw\\
&\ll \begin{cases}
(n_{i+1}n_{i+2}\cdots n_r)^{-1}n_{i+1}^{-\sigma-i}&(-\sigma>i),\\
(n_{i+1}n_{i+2}\cdots n_r)^{-1}\log n_{i+1}&(-\sigma=i),\\
(n_{i+1}n_{i+2}\cdots n_r)^{-1}n_{i}^{-\sigma-i}&(-\sigma<i),
\end{cases}\\
&\ll \begin{cases}
(n_{i+1}n_{i+2}\cdots n_r)^{-1}n_{i+1}^{-\sigma-i}\log (n_{i+1}+1)&(-\sigma\ge i),\\
(n_{i+1}n_{i+2}\cdots n_r)^{-1}n_{i}^{-\sigma-i}&(-\sigma<i),
\end{cases}
\end{align*}
where $n_0=0$ and $n_{r+1}=\infty$. Hence we can estimate
\begin{align}\label{bunkatsu}
\begin{split}
&\sum_{0<n_1<\cdots<n_r}\frac{1}{n_1^{k_1-1}\cdots n_r^{k_r-1}}\int_{0}^{\infty}\frac{w^{-\sigma-1}}{(w+n_1)\cdots(w+n_r)}dw\\
&\ll
\sum_{0\le i\le-\sigma}\sum_{0<n_1<\cdots<n_r}(n_1^{-k_1+1}\cdots n_i^{-k_i+1})n_{i+1}^{-k_{i+1}-\sigma-i}(n_{i+2}^{-k_{i+2}}\cdots n_r^{-k_r})\log (n_{i+1}+1)\\
&\quad+\sum_{-\sigma<i\le r}\sum_{0<n_1<\cdots<n_r}(n_1^{-k_1+1}\cdots n_{i-1}^{-k_{i-1}+1})n_{i}^{-k_{i}-\sigma-i+1}(n_{i+1}^{-k_{i+1}}\cdots n_r^{-k_r}).
\end{split}
\end{align}
Note that by Matsumoto \cite{Mat02}, the infinite series $\sum_{0< n_1<\cdots <n_r} n_1^{-\sigma_1}\cdots n_r^{-\sigma_r}$ converges for
\begin{align*}
&\sigma_{1}+\cdots +\sigma_r>r,\\
&\sigma_{2}+\cdots +\sigma_r>r-1,\\
&\cdots\\
&\sigma_r>1.
\end{align*}
Thus the first  series in the right-hand side of \eqref{bunkatsu} converges when
\begin{align*}
&k_{1}+\cdots +k_r+\sigma>r,\\
&k_{2}+\cdots +k_r+\sigma>r-2,\\
&\cdots\\
&k_{i+1}+\cdots +k_r+\sigma>r-2i
\end{align*}
for $0\le i\le-\sigma$.
These conditions can be simply written as 
\begin{align*}
\max_{1\leq j\leq 1-\sigma}\{r-2j+2-(k_j+\cdots+k_r)\}<\sigma.
\end{align*}
Similarly   the second series of the right-hand side in \eqref{bunkatsu} converges when 
\begin{align*}
\max_{1\leq j\leq r}\{r-2j+2-(k_j+\cdots+k_r)\}<\sigma.
\end{align*}
Hence by the identity theorem, we obtain the theorem.

\end{proof}

\begin{proof}[Proof of Theorem \ref{main7}]
Let $E_j$ be the elementary symmetric polynomial of degree $j$ in\\
 $(n_1,\ldots,n_{\ell-1},n_{\ell+1},\ldots,n_{r})$. Then we have
\begin{align}\label{symmetric}
\begin{split}
&-\frac{\sin(\pi s)}{\pi}\sum_{0<n_1<\cdots<n_r}\frac{n_\ell}{n_1^{k_1}\cdots n_r^{k_r}}\int_0^\infty\frac{w^{-s-1}(w+n_1)\cdots(w+n_{\ell-1})(w+n_{\ell+1})\cdots(w+n_{r})}{(w+n_1)\cdots(w+n_r)}dw\\
&=-\frac{\sin(\pi s)}{\pi}\sum_{0<n_1<\cdots<n_r}\frac{n_\ell}{n_1^{k_1}\cdots n_r^{k_r}}\int_0^\infty\frac{w^{-s-1}(w^{r-1}+E_1w^{r-2}+\cdots+E_{r-1})}{(w+n_1)\cdots(w+n_r)}dw.
\end{split}
\end{align}
The left-hand side in \eqref{symmetric} is $\zeta(k_1,\ldots,k_{\ell-1},k_\ell+s,k_{\ell+1},\ldots,k_r)$ for $-1<\sigma<0$. On the other hand, the right-hand side in \eqref{symmetric} is 
\begin{align*}
&(-1)^{r-1}\sum_{\substack{|\boldsymbol{e}|=r-1\\e_1,\ldots,e_{r}\leq1\\e_\ell=0}}I_{\boldsymbol{k}+\boldsymbol{e}}(s-r+1)+(-1)^{r-2}\sum_{\substack{|\boldsymbol{e}|=r-2\\e_1,\ldots,e_{r}\leq1\\e_\ell=0}}I_{\boldsymbol{k}+\boldsymbol{e}}(s-r+2)\\
&+(-1)^{r-3}\sum_{\substack{|\boldsymbol{e}|=r-3\\e_1,\ldots,e_{r}\leq1\\e_\ell=0}}I_{\boldsymbol{k}+\boldsymbol{e}}(s-r+3)\\
&+\cdots\\
&+\sum_{\substack{|\boldsymbol{e}|=0\\e_1,\ldots,e_{r}\leq1\\e_\ell=0}}I_{\boldsymbol{k}+\boldsymbol{e}}(s)\\
=&\sum_{m=0}^{r-1}(-1)^{m}\sum_{\substack{|\boldsymbol{e}|=m\\e_1,\ldots,e_{r}\leq1\\e_\ell=0}}I_{\boldsymbol{k}+\boldsymbol{e}}(s-m).
\end{align*}
for 
\begin{align}\label{last}
\max_{\substack{1\leq j\leq r\\|\boldsymbol{e}|=m\\e_1,\ldots,e_{r}\leq1\\e_\ell=0}}\{r-2j+2-(k_j+e_j+\cdots+k_r+e_r)\}<\Re(s-m)<0\quad(m=0,\ldots,r-1).
\end{align}
If $\boldsymbol{k}$ satisfies 
$$\max_{\substack{1\leq j\leq r\\|\boldsymbol{e}|=m\\e_1,\ldots,e_{r}\leq1\\e_\ell=0}}\{r-2j+2-(k_j+e_j+\cdots+k_r+e_r)\}<-m$$
for all $m=0,\ldots,r-1$, the inequality \eqref{last} holds for $-1<\sigma<0$. Hence the theorem is valid for $-1<\sigma<0$. By meromorphic continuation, we obtain the result.
\end{proof}

\section*{Acknowledgements}
This work was supported by JSPS KAKENHI Grant Numbers JP20K03523 and JP19K14511.


\end{document}